\numberwithin{equation}{section}
\numberwithin{figure}{section}
\theoremstyle{plain}
\newtheorem{thm}{Theorem}
\newtheorem{lem}[thm]{Lemma}
\newtheorem{prop}[thm]{Proposition}
\theoremstyle{definition}
\newtheorem{definition}{Definition}
\theoremstyle{remark}
\newtheorem*{remark}{Remark}
\newcommand{\R}{\mathbb{R}}
\newcommand{\C}{\mathbb{C}}
\newcommand{\B}{\mathbb{B}}
\renewcommand{\P}{\mathcal{P}}
\begin{document}

\title[Interpolation by polynomials in convex domains]{Interpolation by multivariate polynomials in convex domains}

\author{Jorge Antezana}
\address{Departamento de Matem\'atica, Universidad Nacional de La Plata, and Instituto Argentino de Matem\'atica ``Alberto P. Calder\'on'' (IAM-CONICET), Buenos Aires, Argentina}
\email{\href{mailto:antezana@mate.unlp.edu.ar}{\texttt{antezana@mate.unlp.edu.ar}}}

\author{Jordi Marzo}
\address{Dept.\ Matem\`atica i Inform\`atica,
 Universitat  de Barcelona and BGSMath,
Gran Via 585, 08007 Bar\-ce\-lo\-na, Spain}
\email{\href{mailto:jmarzo@ub.edu}{\texttt{jmarzo@ub.edu}}}

\author{Joaquim Ortega-Cerd\`a}
\address{Dept.\ Matem\`atica i Inform\`atica,
 Universitat  de Barcelona and BGSMath,
Gran Via 585, 08007 Bar\-ce\-lo\-na, Spain}
\email{\href{mailto:jortega@ub.edu}{\texttt{jortega@ub.edu}}}

\thanks{Supported by the the Spanish Ministerio de Econom\'ia y Competividad 
(grant MTM2017-83499-P) and the Generalitat de Catalunya (grant 2017 SGR 358)}

\begin{abstract}
Let $\Omega$ be a convex open set in $\mathbb R^n$ and let $\Lambda_k$ 
be a finite subset of $\Omega$. We find necessary geometric conditions for 
$\Lambda_k$ to be interpolating for the space of multivariate polynomials of 
degree at most $k$. Our results are asymptotic in $k$. The density conditions 
obtained match precisely the necessary geometric conditions that sampling sets 
are known to satisfy and they are expressed in terms of the equilibrium 
potential of the convex set. Moreover we prove that in the particular case 
of the unit ball, for $k$ large enough, there is no family of orthogonal reproducing kernels in the 
space of polynomials of degree at most $k$. 
\end{abstract}
\date{\today}
\maketitle

\section{Introduction}

Given a measure $\mu$ in $\mathbb R^n$ we consider the space $\mathcal P_k$ of 
polynomials of total degree at most $k$ in  
$n$-variables endowed with the natural scalar product in $L^2(\mu)$.  We assume 
that $L^2(\mu)$ is a norm for $\mathcal P_k$, i.e. the support of $\mu$ is not 
contained in the zero set of any $p\in \P_k$, $p\ne 0$. In this case the point 
evaluation at any given point $x\in \R^n$ is a bounded linear functional and 
$(\P_k, L^2(\mu))$ becomes a reproducing kernel Hilbert space, i.e for any 
$x\in \R^n$, there is a unique function $K_k(\mu, x,\cdot)\in \mathcal P_k$ such that
\[
 p(x) = \langle p, K_k(\mu,x,\cdot)\rangle = \int p(y) K_k(\mu,x, y) \, d\mu(y).
\]
Given a point $x\in \R^n$ the normalized reproducing kernel is denoted by 
$\kappa_{k,y}$, i.e.
\[
 \kappa_{k,y}(\mu,x) = \frac{K_k(\mu,x,y)}{\|  K_k(\mu,x,\cdot) \|_{L^2(\mu)}} = \frac{K_k(\mu,x,y)}{\sqrt{K_k(\mu,x,x)}}.
\]
We will denote by $\beta_k(\mu,x)$ the value of the reproducing kernel in the diagonal
$$\beta_k(\mu,x)=K_k(\mu,x,x).$$
The function $1/\beta_k(\mu,x)$ is the so called Christoffel function.
For brevity we may omit sometimes the dependence on $\mu$.

\medskip

Following Shapiro and Shields in \cite{shsh} we define sampling and interpolating sets: 
\begin{definition}\label{definterp}
A sequence $\Lambda = \{\Lambda_{k}\}$ of finite sets of points on $\R^n$ is 
said to be
\emph{interpolating} for $(\mathcal P_{k},L^2(\mu))$ if the associated family 
of 
normalized  reproducing
kernels at the points $\lambda\in \Lambda_k$, i.e. $\kappa_{k,\lambda},$ 
is a Riesz sequence in the Hilbert space $\P_k$, uniformly in $k$, i.e there is 
a constant $C>0$ independent of $k$ such that for any linear combination of the 
normalized reproducing kernels we have:
\begin{equation}			\label{ineq:interp}
\frac 1{C} \sum_{\lambda\in\Lambda_k} |c_\lambda|^2 \le \bigl\|
\sum_{\lambda\in \Lambda_k} c_\lambda
\kappa_{k,\lambda} \bigr\|^2\le C   \sum_{\lambda\in\Lambda_k} |c_\lambda|^2,\quad
\forall \{c_\lambda\}_{\lambda\in\Lambda_k}.
 \end{equation}
\end{definition}


The  definition above is 
usually decoupled in two separate conditions. 
The  left hand side  
inequality in (\ref{ineq:interp})
is usually called the \emph{Riesz-Fischer} property for the reproducing kernels 
and it is equivalent to the fact that the following moment problem is solvable: 
for arbitrary values $\{v_\lambda\}_{\lambda_\in \Lambda_k}$ there exists a 
polynomial 
$p\in \P_k$ such that $p(\lambda)/\sqrt{\beta_k(\lambda)} = \langle p, 
\kappa_{k,\lambda}\rangle = v_\lambda$ for all $\lambda\in \Lambda_k$ and 
\[
\|p\|^2 \le C \sum_{\lambda\in \Lambda_k} |v_\lambda|^2 = \sum_{\lambda\in 
\Lambda_k} \frac{|p(\lambda)|^2}{\beta_k(\lambda)}. 
\]
This is the reason $\Lambda$ is 
called an interpolating family.


The right hand side inequality in (\ref{ineq:interp})
is called the Bessel property for the normalized reproducing kernels 
$\{\kappa_{k,\lambda}\}_{\lambda\in \Lambda_k}$. The Bessel property is equivalent to have
\begin{equation}\label{Carlesonseq}
 \sum_{\lambda\in\Lambda_k} \frac{|p(\lambda)|^2}{\beta_k(\lambda)} \le C  
\|p\|^2
\end{equation}
for all $p\in \mathcal P_k.$ That is, if we denote $\mu_k := \sum_{\lambda\in \Lambda_k} 
\frac{\delta_{\lambda}}{\beta_k(\lambda)}$, we are requiring that the identity 
is a 
continuous embedding of $(\P_k, L^2(\mu))$ into $(\P_k, L^2(\mu_k))$.


\medskip

The notion of sampling play a similar but opposed role.

\begin{definition}         \label{defsampling}
A sequence $\Lambda = \{\Lambda_{k}\}$ of finite sets of points on $\R^n$ is 
said to be
\emph{sampling} or \emph{Marcinkiewicz-Zygmund} for $(\mathcal P_{k},L^2(\mu))$ 
if the associated family of normalized  reproducing
kernels at the points $\lambda\in \Lambda_k$, $\kappa_{k,\lambda}(x)$ 
is a frame in the Hilbert space $\P_k$, uniformly in $k$, i.e there is 
a constant $C>0$ independent of $k$ such that for any polynomial $p\in P_k$:
\begin{equation}        \label{def:sampling}
\frac 1{C} \sum_{\lambda\in\Lambda_k} |\langle p, \kappa_{k,\lambda} \rangle|^2 \le 
\| p\|^2 \le C   \sum_{\lambda\in\Lambda_k} |\langle p, 
\kappa_{k,\lambda} \rangle|^2,\quad
\forall p\in \P_k.
\end{equation}
\end{definition}

Observe that the left hand side inequality in (\ref{def:sampling}) is the Bessel condition mentioned above.  If we were considering a 
single space of polynomials $\mathcal P_{k_0}$ then the notion of interpolating 
family amounts to say that the corresponding reproducing kernels are 
independent. On the other hand, the notion of sampling family corresponds to the 
reproducing kernels span the whole space $\P_{k_0}$. 

In this work we will restrict our attention to two classes of measures:
\begin{itemize}
 \item The 
first is $d\mu(x) = \chi_\Omega(x) dV(x)$ where $\Omega$ is a 
smooth bounded convex domain and $dV$ is the Lebesgue measure.
\item The second is of the form $d\mu(x) = (1-|x|^2)^{a-1/2}\chi_\B(x) dV(x)$ where 
$a\ge 0$ and $\B$ is the unit ball $\B = \{x\in \R^n : |x| \le 1\}$.  
\end{itemize}

In these two cases there are good explicit estimates for the size of the 
reproducing kernel on the diagonal $K_k(\mu,x,x),$ and therefore both notions, 
interpolation and sampling families, become more tangible. 
In \cite{BerOrt} the authors obtained  necessary geometric conditions for sampling families in bounded smooth convex sets with weights when
the weights satisfy two technical conditions: Bernstein-Markov and moderate growth. These properties 
are both satisfied for the Lebesgue measure in a convex set. 
The case of interpolating families in convex sets was not considered, since there were 
several technical hurdles to apply the same technique.

Our aim in this paper is to fill this gap and obtain necessary geometric 
conditions for interpolating families in the two settings mentioned above.
The geometric conditions that usually appear in this type of problem come into 
three flavours:
\begin{itemize}
 \item A separation condition. This is implied by the Riesz-Fischer 
condition i.e. the left hand side of (\ref{ineq:interp}). The fact that one should be able to interpolate the 
values one and 
zero implies that different points $\lambda, \lambda' \in \Lambda_k$ with 
$\lambda \ne \lambda'$ cannot be too close. The separation conditions in our settings are studied 
in Section 3.1.

\item A Carleson type condition. This is a condition that ensures the 
continuity of the embedding as in (\ref{Carlesonseq}). A geometric 
characterization of the Carleson is given in  Theorem \ref{Carleson} 
for convex domains and the Lebesgue measure, and in Theorem
\ref{Carleson en bolas} for the ball and the measures $\mu_a$. 

\item A density condition. This is a global condition that usually follows from 
both the Bessel and the Riesz-Fischer condition. A density necessary condition 
for interpolating sequences is provided in Theorem \ref{density} for convex sets 
endowed with the Lebesgue measure, and in Theorem \ref{teo:density_ball} for 
the ball and the measures $\mu_a$. Moreover, in this last setting we get an extension 
of the density results proved in \cite{BerOrt} for sampling sequences.
\end{itemize}

Finally, a natural question is whether or not there exists a family $\{\Lambda_k\}$ that is both sampling and interpolating. To answer this question is very difficult  in general \cite{OUbook}. A particular case 
is when $\{\kappa_{k,\lambda}\}_{\lambda\in \Lambda_k}$ form an orthonormal basis. 
In the last section we study the existence of orthonormal basis of reproducing kernels in the case of the ball 
with the measures $\mu_a$. More precisely, if the spaces $\P_k$ endowed with the inner product of $L^2(\mu_a)$, then in Theorem \ref{noONbasis} we prove that for $k$ big enough the space $\P_k$ does 
not admit an orthonormal basis of reproducing kernels. To
determine whether or not there exists a family $\{\Lambda_k\}$ that is both sampling and interpolating for $(\P_k,\mu_a)$ remains an open problem.

\section{Technical results}

Before stating and proving our results we will recall the behaviour of the kernel in the diagonal, or equivalently the Christoffel function, we will define an appropriate metric and introduce some needed tools.

\subsection{Christoffel functions and equilibrium measures}

To write explicitly the sampling and interpolating conditions we need an estimate of the Christoffel function. In \cite{BerOrt} it was observed that in the case of the measure
$d\mu(x) = \chi_\Omega(x) dV(x)$ it is possible to obtain precise estimates for 
the size of the reproducing kernel on the diagonal:
\begin{thm}\label{thm:diagonalestimate}
Let $\Omega$ be a smoothly bounded convex domain in $\R^n$. Then the 
reproducing kernel for $(\mathcal P_{k},\chi_\Omega dV)$ satisfies
\begin{equation}\label{kernelconvex}
\beta_k(x)=K_k(x,x) \simeq \min\Bigl( 
\frac{k^n}{\sqrt{d(x,\partial \Omega)}}, k^{n+1}\Bigr)\quad
\forall x\in\Omega.
\end{equation}
where  $d(x,\partial \Omega)$ denotes the Euclidean distance of $x\in \Omega$ to the boundary 
of $\Omega$.
\end{thm}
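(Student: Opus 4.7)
The plan is to establish matching upper and lower bounds by reducing to one-dimensional Christoffel-function asymptotics through an anisotropic localization adapted to the boundary geometry of $\Omega$. Set $\delta:=d(x,\partial\Omega)$; the threshold $\delta\simeq 1/k^2$ marks the transition between the two regimes appearing in \eqref{kernelconvex}.

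For the lower bound I would use the extremal characterization
\[
\beta_k(x)=\sup\Bigl\{\frac{|p(x)|^2}{\|p\|^2_{L^2(\Omega)}}:p\in\P_k,\ p\ne 0\Bigr\}
\]
and exhibit suitable test polynomials. Choose a point $x_0\in\partial\Omega$ closest to $x$, let $\nu$ be the inward unit normal at $x_0$, and pick orthonormal tangential directions. Smoothness and convexity guarantee a box $B_x\subset\Omega$ centered near $x$ of side length $\simeq\min(1,\sqrt{\delta})$ in each tangential direction and $\simeq\min(1,\delta)$ in the normal direction $\nu$. On $B_x$ I would form a tensor product of univariate Chebyshev kernels, each of degree proportional to the reciprocal of the associated edge length, so that the total degree stays $\le k$. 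Evaluating at $x$ and dividing by the $L^2$-mass, which is controlled by the volume of $B_x$, yields $\beta_k(x)\gtrsim k^n/\sqrt{\delta}$ when $\delta\gtrsim 1/k^2$ and $\beta_k(x)\gtrsim k^{n+1}$ when $\delta\lesssim 1/k^2$.

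For the upper bound I would invoke pluripotential theory together with a Bernstein-Markov argument. A smoothly bounded convex body in $\R^n$ has an equilibrium measure whose density is comparable to $1/\sqrt{d(x,\partial\Omega)}$ in the interior and saturates inside the boundary layer of width $1/k^2$. The classical principle $\beta_k(x)\lesssim \dim\P_k\cdot (\text{density of equilibrium measure at }x)$, justified via a Bernstein-Markov comparison for convex bodies, gives the upper half of \eqref{kernelconvex}, with the cap $k^{n+1}$ coming from the Markov inequality in the boundary strip. Equivalently, one may produce the reproducing kernel for a model slab or osculating half-space and pull it back via an affine normalization at $x_0$ adapted to the osculating paraboloid of $\partial\Omega$; estimating the model kernel on the diagonal and inverting the normalization yields the same bound.

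The main obstacle is the uniform matching of both constructions across the transition region. The anisotropic box in the lower bound and the model kernel in the upper bound must match up to multiplicative constants that do not depend on $x$ or $k$, which requires an affine normalization whose parameters are controlled by the second fundamental form of $\partial\Omega$ at $x_0$; the smoothness hypothesis is precisely what ensures that these constants are uniform. Away from the boundary, one-dimensional Chebyshev asymptotics and tensorization give the estimate quickly; the real work lies in the boundary strip of width $1/k^2$, where one must verify that the Markov cap $k^{n+1}$ is both attained by a localized test polynomial and not exceeded by any polynomial in $\P_k$.
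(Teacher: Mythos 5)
This theorem is not proved in the paper at all: it is quoted from \cite{BerOrt}, where the two-sided estimate is obtained essentially by monotonicity of the Christoffel function under inclusion of domains, comparing $\Omega$ at the point nearest to $x$ with an inscribed tangent Euclidean ball of fixed radius (upper bound, via the known ball asymptotics, i.e.\ Proposition \ref{prop:Christoffel} with $a=\tfrac12$) and with a circumscribed tangent cube (lower bound, via tensor products of one-dimensional Christoffel estimates); the same devices reappear in this paper in the proofs of Theorems \ref{separadas} and \ref{Carleson}. Your sketch does not follow this route, and as written both halves have genuine gaps.

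For the lower bound, the tensor-Chebyshev construction on the inscribed box of tangential side $\simeq\sqrt\delta$ and normal side $\simeq\delta$ does not work. First, ``degree proportional to the reciprocal of the edge length'' forces normal degree $\simeq 1/\delta$, which exceeds the budget $k$ as soon as $\delta\ll 1/k$, so the construction is unavailable precisely in the delicate range $1/k^2\lesssim\delta\lesssim 1/k$. Second, and more seriously, ``dividing by the $L^2$-mass, controlled by the volume of $B_x$'' tacitly replaces $\|p\|_{L^2(\Omega)}$ by $\|p\|_{L^2(B_x)}$: a Chebyshev-type kernel grows exponentially off its box, and $\Omega\setminus B_x$ has unit size, so $|p(x)|^2/\|p\|^2_{L^2(\Omega)}$ is not bounded below by $1/V(B_x)$ (and if it were, your scales would give $\delta^{-(n+1)/2}$, which is not $k^n/\sqrt\delta$). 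The correct localization scale around $x$ is the anisotropic ball of $\rho$-radius $1/k$ (tangential size $1/k$, normal size $\sqrt\delta/k+1/k^2$), and the clean way to realize it within degree $k$ is domain monotonicity: for a cube $Q\supset\Omega$ tangent to $\partial\Omega$ at the closest point to $x$ one has $K_k^\Omega(x,x)\ge K_k^Q(x,x)$, and the right-hand side factors into one-dimensional Christoffel functions giving exactly $\min\bigl(k^n/\sqrt\delta,\,k^{n+1}\bigr)$. For the upper bound, the Bernstein--Markov principle only yields weak-$*$ convergence of $\frac1{\dim\P_k}K_k(x,x)\,dV$ to $\mu_{eq}$; it gives no pointwise bound at the scale $1/k^2$ near the boundary, which is exactly what is being claimed, and the ``osculating half-space or slab'' model has no $L^2$ reproducing kernel for polynomials since the model domain has infinite volume. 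Here too the fix is monotonicity: an inscribed ball $B(p,r)\subset\Omega$ of fixed radius $r$, tangent at the boundary point nearest to $x$ and containing $x$, satisfies $d(x,\partial B(p,r))=d(x,\partial\Omega)$, hence $K_k^\Omega(x,x)\le K_k^{B(p,r)}(x,x)$, and the ball estimate \eqref{kernelball} with $a=\tfrac12$ (after rescaling) gives the upper half of \eqref{kernelconvex}, including the cap $k^{n+1}$.
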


For the weight $(1-|x|^2)^{a-1/2}$ in the ball $\mathbb B$ the asymptotic behaviour of the Christoffel is well known.




\begin{prop}					\label{prop:Christoffel}
  For any $a \ge  0$ and $d\ge 1$ let $$d\mu_a(x)=(1-|x|^2)^{a-1/2}\chi_\B(x) dV(x).$$ Then the reproducing kernel for $(\mathcal P_{k},d\mu_a )$ satisfies
  \begin{equation}\label{kernelball}
\beta_k(\mu_a,x)=K_k(\mu_a,x,x) \simeq \min\Bigl( 
\frac{k^n}{d(x,\partial \mathbb B)^a}, k^{n+2a}\Bigr)\quad
\forall x\in\Omega.
\end{equation}

\end{prop}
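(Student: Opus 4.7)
The plan is to prove matching upper and lower bounds on $\beta_k(\mu_a,x)$ via the variational characterization
\[
\frac{1}{\beta_k(\mu_a,x)} = \min_{p \in \P_k,\, p(x)=1} \int_\B |p(y)|^2\, d\mu_a(y),
\]
interpreted against an anisotropic neighborhood adapted to the geometry of $\B$. Set $\rho_k(x) := d(x,\partial\B) + 1/k^2$ and introduce, for $x\ne 0$, the box
\[
B_k(x) = \Bigl\{y \in \B : \bigl|\langle y-x,\, x/|x|\rangle\bigr| \le c\sqrt{\rho_k(x)}/k,\ \bigl|y - x - \langle y-x,\, x/|x|\rangle x/|x|\bigr| \le c/k\Bigr\},
\]
with a small absolute constant $c>0$ (and the Euclidean ball of radius $1/k$ when $x=0$). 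A direct integration of the weight $(1-|y|^2)^{a-1/2}$ on such a box, using that $1-|y|^2\simeq \rho_k(x)$ on $B_k(x)$, gives
\[
\mu_a(B_k(x)) \simeq \frac{\rho_k(x)^a}{k^n} \simeq \max\!\Bigl(\frac{d(x,\partial\B)^a}{k^n},\; \frac{1}{k^{n+2a}}\Bigr),
\]
so the target estimate is exactly $\beta_k(\mu_a,x) \simeq 1/\mu_a(B_k(x))$.

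For the upper bound $\beta_k(\mu_a,x) \lesssim 1/\mu_a(B_k(x))$ (lower bound on the Christoffel function), I would invoke the Bernstein--Markov inequalities on $\B$: the tangential bound $|\nabla_\tau p(y)| \lesssim k\,\|p\|_\infty$ together with the radial bound $|\partial_r p(y)| \lesssim k\|p\|_\infty/\sqrt{d(y,\partial\B)+1/k^2}$ (classical, see e.g.\ the works of Kro\'o and the Dai--Xu monograph). With $c$ small enough in the definition of $B_k(x)$, any $p\in\P_k$ with $p(x)=1$ satisfies $|p(y)| \ge 1/2$ throughout $B_k(x)$, whence $\int|p|^2\,d\mu_a \ge \tfrac14\mu_a(B_k(x))$; minimizing over $p$ yields the claim.

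For the matching lower bound, one constructs an explicit peak polynomial. The cleanest route is to invoke Xu's closed-form integral representation of $K_k(\mu_a,x,y)$ as a Gegenbauer/Jacobi kernel integrated over an auxiliary parameter, and to evaluate on the diagonal using the standard asymptotics $C_k^{(\lambda)}(1)\simeq k^{2\lambda-1}$ together with the Christoffel--Darboux summation. A more hands-on construction takes the test polynomial to be the product of a Jacobi-type peak in the radial variable $|y|^2$ (with parameters matched to the weight $(1-|y|^2)^{a-1/2}$) of radial degree $\lesssim k\sqrt{\rho_k(x)}$ and a tangential peak of the form $\bigl((1+\langle y/|y|, x/|x|\rangle)/2\bigr)^N$ with $N\simeq k^2\rho_k(x)$; the total degree stays within the budget $k$ and the weighted $L^2$-mass is $\simeq \mu_a(B_k(x))$.

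The main obstacle is precisely the extremal/peak polynomial construction near the boundary, where the total-degree constraint $k$ must be split between the radial scale $\sqrt{\rho_k(x)}/k$ and the tangential scale $1/k$ while keeping the weighted $L^2$-norm sharp; the radial Jacobi parameters also shift with $|x|$. Appealing to Xu's explicit formula bypasses these technicalities, which is presumably why the authors quote the proposition as \emph{well known}.
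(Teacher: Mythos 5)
Your overall strategy (variational characterization of the Christoffel function against an anisotropic box $B_k(x)$ with $\mu_a(B_k(x))\simeq \max(d(x,\partial\B)^a k^{-n}, k^{-n-2a})$) is a reasonable and genuinely different route from the paper, which instead gets both bounds in two lines from the Petrushev--Xu localized kernels $L_k^a$: the extremal characterization with the test function $L^a_{[k/2]}(\cdot,x)$ gives the lower bound, and Cauchy--Schwarz applied to the reproducing identity $p(x)=b_n^a\int L_k^a(x,y)p(y)\,d\mu_a(y)$ together with $\|L_k^a(x,\cdot)\|^2_{L^2(\mu_a)}\lesssim \min(k^n/d(x,\partial\B)^a,\,k^{n+2a})$ (off-diagonal decay plus Lemma~\ref{lem3_weight}) gives the upper bound. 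However, your proposal has a genuine gap in the upper-bound half. The step ``with $c$ small enough, any $p\in\P_k$ with $p(x)=1$ satisfies $|p(y)|\ge 1/2$ throughout $B_k(x)$'' does not follow from Bernstein--Markov: those inequalities bound $|\nabla p|$ by $k\|p\|_\infty$ (or a weighted variant), not by $|p(x)|$, so they control the oscillation of $p$ at scale $1/k$ only when $\|p\|_\infty\lesssim 1$. A competitor in the Christoffel minimization need not satisfy this: $p(y)=1+N\langle y-x,e\rangle$ with $N\gg k$ has $p(x)=1$ but vanishes at distance $1/N\ll 1/k$ from $x$, so it is not bounded below on $B_k(x)$ for any fixed $c$. (The final inequality $\int|p|^2 d\mu_a\gtrsim\mu_a(B_k(x))$ is still true for such $p$, but your argument does not prove it; even the true minimizer $K_k(x,\cdot)/K_k(x,x)$ has sup norm of size up to $k^{a}$ near the boundary, so no uniform choice of $c$ rescues the step.) What is really needed here is a pointwise Nikolskii-type bound $|p(x)|^2\lesssim \mu_a(B(x,1/k))^{-1}\int|p|^2 d\mu_a$, which is essentially the statement being proved; the standard non-circular ways to get it are the localized-kernel argument above or Xu's explicit diagonal asymptotics.

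For the lower-bound half, deferring to Xu's closed-form kernel (or to the known asymptotics of Bos and Xu) is legitimate and close in spirit to the paper's citation of Petrushev--Xu, but the ``hands-on'' product construction as written should not be relied on: a zonal factor $\bigl((1+\cos\theta)/2\bigr)^N$ with $N\simeq k^2\rho_k(x)$ already has degree exceeding $k$ as soon as $\rho_k(x)\gg 1/k$, and the radial/tangential scales it produces ($1/(k\sqrt{\rho_k(x)})$ tangentially) do not match the box $B_k(x)$, so the claimed $L^2(\mu_a)$-mass $\simeq\mu_a(B_k(x))$ is not justified. In short: the statement is correct and your framework is workable, but both halves ultimately lean on exactly the localization technology (Petrushev--Xu kernels or Xu's explicit formulas) that the paper quotes, and the one self-contained ingredient you propose -- the Bernstein--Markov step -- is invalid as stated.
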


The proof follows from \cite[Prop 4.5 and 5.6]{petxu}, Cauchy–Schwarz inequality and the extremal characterization of the kernel
$$K_k(\mu_a;x,x)=\left\{ |P(x)|^2\;\; : \;\;P\in \mathcal P_k,\int |P|^2 d\mu_a \le 1  \right\}.$$

To define the equilibrium measure we have to introduce a few concepts from pluripotential theory, see \cite{Kli91}. Given a non pluripolar compact set $K\subset \R^n\subset \C^n$ the pluricomplex Green function is the semicontinuous regularization 
$$G^*_K(z)=\limsup_{\xi \to z}G_K(\xi),$$
where
$$G_K(\xi)=\sup \left\{ \frac{\log^+|p(\xi)|}{\deg(p)}\; : \; p\in P(\C^n),\; \sup_K |p(\xi)|\le 1\right\}.$$
The pluripotential equilibrium measure for of $K$ is the (probability) Monge-Amp\`ere Borel measure 
$$d \mu_{eq}=(d d^c G_K^*)^n.$$

In the general case, when $\Omega$ is a smooth bounded convex domain the equilibrium 
measure is very well understood, see \cite{BedTay} and \cite{blmr}. It behaves 
roughly as $d\mu_{eq}\simeq 1/\sqrt{d(x,\partial\Omega)}dV$. In particular,
the pluripotential equilibrium measure for the ball $\mathbb B$ is given (up to normalization) by $d \mu_0(x)=\frac{1}{\sqrt{1-|x|^2}}dV(x).$  


\subsection{An anisotropic distance}\label{metric}
The natural distance to formulate the separation condition and the 
Carleson condition is not the Euclidean distance. Consider in the unit ball 
$\B\subset \R^n$ the following distance: 
\[
 \rho(x,y) =  \arccos\left\{\langle x, y\rangle + \sqrt{1-|x|^2}\sqrt{1-|y|^2}\right\}.
\]
This is the geodesic distance of the points $x',\ y'$ in the 
sphere $\mathbb S^{n}$ defined as $x'=(x,\sqrt{1-|x|^2})$ and 
$y'=(y,\sqrt{1-|y|^2})$.
If we consider anisotropic balls $B(x,\varepsilon) = \{y \in \mathbb B: 
\rho(x,y) < \varepsilon\}$, 
 they are comparable to a box centered at $x$ (a 
product of intervals) which are of size $\varepsilon$ in the tangent directions 
and 
size $\varepsilon^2 + 
\varepsilon\sqrt{1-|x|^2}$ in the normal direction. If we want to refer to a 
Euclidean 
ball of center $x$ and radius $\varepsilon$ we would use the notation 
$\B(x,\varepsilon)$.

The Euclidean volume of a ball $B(x,\varepsilon)$ is comparable to 
$\varepsilon^{n}\sqrt{1-|x|^2}$ if  $(1-|x|^2) > \varepsilon^2$ and 
$\varepsilon^{n+1}$ otherwise. 

This distance $\rho$ can be extended to an arbitrary smooth convex domain 
$\Omega$ by using Euclidean balls contained in $\Omega$ and tangent to the 
boundary of $\Omega$. This can be done in the following way. Since $\Omega$ is 
smooth, there is a tubular neighbourhood $U\subset \R^n$ of the boundary of 
$\Omega$ where each point $x\in U$ has a unique closest point $\tilde x$ in 
$\partial \Omega$ and the normal line to $\partial \Omega$ at $\tilde x$ passes 
by $x$. There is a fixed small radius $r>0$ such that for any point $x\in U\cap 
\Omega$ it is contained in a ball of radius $r$, $B(p, r)\subset \Omega$ and 
such that it is tangent to $\partial \Omega$ at $\tilde x$. We define on $x$ a 
Riemannian metric which comes from the pullback of the standard metric on 
$\partial \tilde B(p,r)$ where $\tilde B(p,r)$ is a ball in $\R^{n+1}$ centered 
at $(p,0)$ and of radius $r>0$ by the projection of $\R^{n+1}$ onto the first 
$n$-variables. In this way we have defined a Riemannian metric in the domain 
$\Omega\cap U$. In the core of $\Omega$, i.e. far from the boundary we use the 
standard Euclidean metric. We glue the two metrics with a partition of unity. 

The resulting metric $\rho$ on $\Omega$ has the relevant property that the 
balls of radius $\epsilon$ behave as in the unit ball, that is a ball 
$B(x,\varepsilon)$ of center $x$ and of radius $\varepsilon$ in this metric is 
comparable to a box of size 
$\varepsilon$ in the tangent directions 
and size $\varepsilon^2 + \varepsilon\sqrt{d(x,\partial \Omega)}$ in the normal 
direction.

\subsection{Well localized polynomials}
The basic tool that we will use to prove the Carleson condition and the 
separation are well localized polynomials. These were studied by Petrushev and 
Xu in the unit ball with the measure $d\mu_a=(1-|x|^2)^{a-\frac{1}{2}}dV,$ for $a\ge 0.$ 
We recall their basic properties:

\begin{thm}[Petrushev and Xu]	\label{theorem_PX}
Let $d\mu_a=(1-|x|^2)^{a-\frac{1}{2}}dV$ for $a\ge 0.$
For any $k\ge 1$ entire and any $y\in \B\subset \R^n$ there are polynomials $L_k^a (\cdot , y) \in \mathcal P_k$ that 
satisfy:
\begin{enumerate}
 \item $L_k^a$ as a variable of $x$ is a polynomial of degree $2k$.
 \item $L_k^a(x,y) = L_k^a(y, x)$.
 \item $L_k^a$ reproduces all the polynomials of degree $k$, i.e.
\begin{equation}							\label{reproducing}
  p(y) = b_n^a \int_\B L_k^a(x,y) p(x)\, d\mu_a(x).\qquad \forall p\in \mathcal P_k.
\end{equation}

\item For any $\gamma>0$ there is a $c_\gamma$ such that 
\begin{equation}		\label{offdiag}
 |L_k^a (x, y)| \le c_\gamma \frac{\sqrt{\beta_k(\mu_a,x) \beta_k(\mu_a,y) }}{(1+ k 
\rho(x,y))^\gamma}.
\end{equation}
\item The kernels $L_k^a$ are Lispchitz with respect to the metric $\rho$, more 
concretely, for all $x\in B(y, 1/k)$:
\begin{equation}\label{integ_weight}
 |L_k^a (w, x) - L_k^a (w, y)| \le c_\gamma \frac{k
\rho(x,y) \sqrt{ \beta_k(\mu_a,w) \beta_k(\mu_a,y) }}{ (1+k \rho(w,y))^\gamma}
\end{equation}
\item						\label{diagonal_weight} 
There is $\varepsilon > 0$  such that $L_k^a (x, y) \simeq 
K_k(\mu_a; y, y)$ for all
$x\in B(y, \varepsilon/k)$.
\end{enumerate}
\end{thm}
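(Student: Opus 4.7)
The plan is to construct $L_k^a$ by a smooth truncation of the reproducing kernel expansion. For each $j\ge 0$, let $\mathcal{H}_j^a\subset L^2(\mu_a)$ denote the orthogonal complement of $\P_{j-1}$ in $\P_j$, and let $P_j^a(x,y)$ be its reproducing kernel, so that $K_k(\mu_a;x,y)=\sum_{j=0}^{k}P_j^a(x,y)$. Fix $\eta\in C^\infty(\R)$ with $\eta\equiv 1$ on $[0,1]$, $\eta\equiv 0$ on $[2,\infty)$, and $0\le\eta\le 1$, and set
\[
L_k^a(x,y)\;=\;\sum_{j=0}^{2k}\eta(j/k)\,P_j^a(x,y).
\]
Properties (1)--(3) are then immediate: the degree $\le 2k$ in $x$ comes from the support of $\eta$; the symmetry comes from the symmetry of each $P_j^a$; and the reproducing property on $\P_k$ uses that $\eta(j/k)=1$ for $j\le k$.

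The main step is the off-diagonal decay (4). The Petrushev--Xu route is to exploit the ridge-function representation of $P_j^a$ on the ball in terms of Gegenbauer polynomials: for $a>0$,
\[
P_j^a(x,y)\;=\;c_{n,a}\int_{-1}^1 C_j^{(n+2a)/2}\!\bigl(\langle x,y\rangle+t\sqrt{1-|x|^2}\sqrt{1-|y|^2}\bigr)\,(1-t^2)^{a-1}\,dt,
\]
with an analogous boundary formula when $a=0$. Summing in $j$ and interchanging sum and integral one obtains
\[
L_k^a(x,y)\;=\;c_{n,a}\int_{-1}^{1}S_k^a\!\bigl(u(x,y,t)\bigr)\,(1-t^2)^{a-1}\,dt,\qquad S_k^a(u)=\sum_{j=0}^{2k}\eta(j/k)\,C_j^{(n+2a)/2}(u).
\]
Iterated Abel summation transfers $\gamma$ derivatives of $\eta$ onto repeated antiderivatives of Gegenbauer polynomials; combined with the classical Bernstein--Szeg\H{o} asymptotics for $C_j^\lambda(\cos\theta)$, this yields $|S_k^a(\cos\theta)|\le c_\gamma\,k^{n+2a-1}(1+k\theta)^{-\gamma}$ for any $\gamma$. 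Since $\arccos u(x,y,1)=\rho(x,y)$ and the anisotropic balls from Section~\ref{metric} are precisely the regions where the $t$-integrand is non-negligible, integrating in $t$ produces the claimed bound, with $\sqrt{\beta_k(\mu_a,x)\beta_k(\mu_a,y)}$ matching the size of the Christoffel function given by Proposition~\ref{prop:Christoffel}.

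Property (5) follows from the same template applied to the difference $L_k^a(w,x)-L_k^a(w,y)$: the inner kernel becomes $S_k^a(u(w,x,t))-S_k^a(u(w,y,t))$, and since $|u(w,x,t)-u(w,y,t)|\lesssim\rho(x,y)$ for $x\in B(y,1/k)$ (using the comparability of $\rho$-balls with anisotropic boxes), the mean value theorem in $u$ reduces the estimate to the same summation-by-parts bound applied to $(S_k^a)'$, at the cost of one extra factor of $k$. Finally, property (6) follows from (3), (4) and the non-negativity of the diagonal values $P_j^a(y,y)$:
\[
\beta_k(\mu_a,y)\;=\;\sum_{j=0}^{k}P_j^a(y,y)\;\le\;L_k^a(y,y)\;\le\;\sum_{j=0}^{2k}P_j^a(y,y)\;=\;\beta_{2k}(\mu_a,y),
\]
both ends being comparable by Proposition~\ref{prop:Christoffel}; combining with the Lipschitz bound (5) on $B(y,\varepsilon/k)$ for $\varepsilon$ small enough yields $L_k^a(x,y)\simeq\beta_k(\mu_a,y)$. \emph{The main obstacle} is the precise iterated summation-by-parts step for Gegenbauer polynomials needed in (4): one must track boundary terms carefully and establish the $(1+k\theta)^{-\gamma}$ decay \emph{uniformly} in the variable $u=\cos\theta$, so that the subsequent integral in $t$ converts the one-dimensional decay into the anisotropic bound on $\B$.
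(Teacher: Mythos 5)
Your construction is exactly the one the paper (following Petrushev--Xu) uses: a smooth cutoff $\eta(j/k)$ applied to the expansion of the kernel into the projections $P_j^a$ onto the spaces of orthogonal polynomials of exact degree $j$. Properties (1)--(3) are handled correctly, and your proof of the near-diagonal property (6) --- positivity of the diagonal terms giving $\beta_k(\mu_a,y)\le L_k^a(y,y)\le\beta_{2k}(\mu_a,y)$, combined with the Lipschitz property (5) and $\beta_k\simeq\beta_{2k}$ --- is precisely the argument the paper gives; note that (6) is in fact the only item the paper proves, since it cites \cite[Thm 4.2, Prop 4.7 and 4.8]{petxu} for (1)--(5).

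Where you go beyond the paper is in sketching (4) and (5), and there the proposal has a genuine gap if read as a standalone proof: the ridge representation of $P_j^a$ through Gegenbauer polynomials and the reduction to a univariate cutoff kernel $S_k^a$ is the right route, but the decisive estimate --- the $(1+k\theta)^{-\gamma}$ decay of $S_k^a(\cos\theta)$ obtained by iterated summation by parts against the Gegenbauer asymptotics --- is asserted rather than proved, and you flag it yourself as the main obstacle; this is exactly the technical core of \cite{petxu}. Moreover the sketch as written is not quite consistent: for the weight $(1-|x|^2)^{a-1/2}$ on $\B\subset\R^n$ the Gegenbauer index is $\lambda=a+\frac{n-1}{2}$ (not $\frac{n+2a}{2}$) and the representation of $P_j^a$ carries an extra factor $\frac{j+\lambda}{\lambda}$; correspondingly the height of $S_k^a$ must be of order $k^{2\lambda+1}=k^{n+2a}$, not $k^{n+2a-1}$ --- with your exponent, integrating in $t$ at $x=y=0$ would give $L_k^a(0,0)\lesssim k^{n-1}$, contradicting (3) and (6), which force $L_k^a(0,0)\gtrsim k^n$. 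Similarly, for (5) a mean value bound in the variable $u$ costs $k^2$ per derivative of $S_k^a$, so to land on the single factor $k\rho(x,y)$ you need a finer bound on $|u(w,x,t)-u(w,y,t)|$ than $\lesssim\rho(x,y)$; that bookkeeping belongs to the same unproven core. In short: for the portion the paper actually proves your argument coincides with it, while for (4)--(5) you attempt to reprove what the paper cites, and that part is incomplete.
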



\begin{proof}
 All the properties are proved in \cite[Thm 4.2, Prop 4.7 and 4.8]{petxu} except the behaviour near the diagonal
number~\ref{diagonal_weight}. Let us start by 
observing that by the Lipschitz condition \eqref{integ_weight} it is enough to prove 
that $L_k^a (x,x)\simeq K_k(\mu_a;x,x)$.  

This follows from the definition of $L_k^a$  which is done as follows.
The subspace $V_k\subset L^2(\B)$ are the polynomials of degree $k$ that are 
orthogonal to lower degree polynomials in $L^2(\B)$ with respect to the measure $d\mu_a$. 
Consider the kernels $P_k(x, y)$ 
which are the kernels that give the orthogonal projection on $V_k$. If 
$f_1,\ldots, f_r$ is an orthonormal basis for $V_k$ then $P_k(x,y) = \sum_{j= 
1}^r f_j(x)f_j(y)$.  The kernel $L_k^a$ is defined as
\[
 L_k^a (x,y) = \sum_{j = 0}^\infty \hat a \left(\frac j k\right) P_j(x,y).
\]
We assume that $\hat a$ is compactly supported, $\hat a \ge 0$, $\hat a \in 
\mathcal C^\infty(\mathbb R)$, $\operatorname{supp} \hat a \subset [0, 2]$, 
$\hat a(t) = 1$
on $[0,1]$ and $\hat a (t) \le 1$ on $[1,2]$ as in the picture:

\begin{center}
\ifx\XFigwidth\undefined\dimen1=0pt\else\dimen1\XFigwidth\fi
\divide\dimen1 by 3174
\ifx\XFigheight\undefined\dimen3=0pt\else\dimen3\XFigheight\fi
\divide\dimen3 by 2499
\ifdim\dimen1=0pt\ifdim\dimen3=0pt\dimen1=4143sp\dimen3\dimen1
  \else\dimen1\dimen3\fi\else\ifdim\dimen3=0pt\dimen3\dimen1\fi\fi
\tikzpicture[x=+\dimen1, y=+\dimen3]
{\ifx\XFigu\undefined\catcode`\@11
\def\temp{\alloc@1\dimen\dimendef\insc@unt}\temp\XFigu\catcode`\@12\fi}
\XFigu4143sp
\ifdim\XFigu<0pt\XFigu-\XFigu\fi
\clip(2913,-4962) rectangle (6087,-3100);
\tikzset{inner sep=+0pt, outer sep=+0pt}
\pgfsetlinewidth{+7.5\XFigu}
\draw (2925,-4725)--(6075,-4725);
\draw (3150,-4950)--(3150,-3200);
\draw (4500,-4770)--(4500,-4680);
\draw (5850,-4770)--(5850,-4680);
\draw (3150,-3600)--(4635,-3600);
\draw 
(4635,-3600)--(4636,-3600)--(4640,-3600)--(4649,-3601)--(4664,-3601)--(4684,
-3603)
  
--(4707,-3604)--(4730,-3606)--(4753,-3609)--(4773,-3612)--(4790,-3616)--(4806,
-3620)
  
--(4820,-3625)--(4833,-3631)--(4845,-3638)--(4855,-3644)--(4865,-3652)--(4875,
-3661)
  
--(4886,-3671)--(4896,-3683)--(4907,-3696)--(4918,-3710)--(4929,-3726)--(4940,
-3743)
  
--(4951,-3761)--(4962,-3780)--(4973,-3799)--(4984,-3820)--(4995,-3841)--(5006,
-3862)
  
--(5018,-3885)--(5026,-3902)--(5035,-3920)--(5044,-3939)--(5054,-3959)--(5064,
-3980)
  
--(5074,-4001)--(5085,-4024)--(5095,-4047)--(5107,-4071)--(5118,-4095)--(5129,
-4120)
  
--(5140,-4144)--(5151,-4169)--(5162,-4193)--(5173,-4217)--(5183,-4240)--(5193,
-4263)
  
--(5202,-4285)--(5211,-4306)--(5219,-4326)--(5227,-4346)--(5235,-4365)--(5244,
-4388)
  
--(5253,-4410)--(5261,-4431)--(5269,-4453)--(5278,-4474)--(5286,-4495)--(5294,
-4515)
  
--(5303,-4535)--(5311,-4554)--(5320,-4572)--(5328,-4589)--(5337,-4605)--(5346,
-4619)
  
--(5355,-4632)--(5364,-4644)--(5373,-4655)--(5382,-4664)--(5393,-4673)--(5405,
-4681)
  
--(5418,-4688)--(5432,-4695)--(5449,-4700)--(5468,-4705)--(5489,-4709)--(5513,
-4713)
  
--(5539,-4716)--(5567,-4719)--(5594,-4721)--(5620,-4722)--(5641,-4724)--(5656,
-4724)
  --(5666,-4725)--(5669,-4725)--(5670,-4725);
\pgftext[base,left,at=\pgfqpointxy{4500}{-4590}] 
{$k$};
\pgftext[base,left,at=\pgfqpointxy{5850}{-4545}] 
{$2k$};
\pgftext[base,left,at=\pgfqpointxy{2970}{-3645}] 
{$1$};
\pgftext[base,left,at=\pgfqpointxy{3690}{-3465}] 
{$\hat a(x/k)$};
\endtikzpicture%
\end{center}

\noindent Then, all the terms are positive in the diagonal. Hence, we get
\[
 \beta_k (\mu_a,x) = K_k(\mu_a;x, x) \le L_k^a (x, x) \le K_{2k}(\mu_a;x,x) = \beta_{2k}(\mu_a,x).
\]
Since $\beta_k (\mu_a,x) \simeq \beta_{2k} (\mu_a,x)$ we obtain the desired estimate.

\end{proof}


They also proved the following integral estimate \cite[Lemma 4.6]{petxu}

\begin{lem}					\label{lem3_weight}
Let $\alpha>0$ and $a\ge 0.$ If $\gamma>0$ is big enough we have
 \[
  \int_{\B} \frac{K_k(\mu_a,y,y)^\alpha }{(1+k \rho(x,y))^\gamma}d\mu_a (y) \lesssim 
\frac 1 {K_k(\mu_a,x,x)^{1-\alpha}}.
 \]
\end{lem}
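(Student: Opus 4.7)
The plan is to perform a dyadic shell decomposition of $\B$ around the reference point $x$ in the anisotropic metric $\rho$, and then to combine Proposition~\ref{prop:Christoffel} with the explicit geometric description of $\rho$-balls given in Section~\ref{metric}. Writing $A_0 = B(x, 1/k)$ and $A_j = B(x, 2^j/k) \setminus B(x, 2^{j-1}/k)$ for $j \ge 1$, on each shell we have $(1 + k\rho(x,y))^\gamma \simeq 2^{j\gamma}$, so the integral is dominated by
\[
\sum_{j \ge 0} 2^{-j\gamma} \int_{A_j} K_k(\mu_a, y, y)^\alpha \, d\mu_a(y).
\]

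The first ingredient I would establish is a pointwise Christoffel comparison
\[
K_k(\mu_a, y, y) \le C (1 + k\rho(x,y))^{2a}\, K_k(\mu_a, x, x), \qquad x,y \in \B.
\]
This follows from Proposition~\ref{prop:Christoffel} together with the box description of $\rho$-balls: the normal extent gives $d(y,\partial\B) \le C(d(x,\partial\B) + (R/k)^2)$ whenever $\rho(x,y) \le R/k$, and a short case split (whether $d(x,\partial\B) \ge 1/k^2$ or not) yields the displayed bound, with the sharp exponent $2a$ attained when $y$ sits at the boundary while $x$ lies at distance $\sim d(x,\partial\B)$. The second ingredient is the volume estimate
\[
\mu_a(B(x, R/k)) \lesssim R^{n+2a}\, \mu_a(B(x, 1/k)) \simeq \frac{R^{n+2a}}{K_k(\mu_a, x, x)},
\]
obtained by integrating the weight $(1-|y|^2)^{a-1/2}$ over the anisotropic box, verified separately in the regimes $R/k \le \sqrt{d(x,\partial\B)}$ and $R/k > \sqrt{d(x,\partial\B)}$, and then matched with Proposition~\ref{prop:Christoffel}.

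Pulling $K_k(\mu_a, x, x)^\alpha$ out of the integrand via the pointwise comparison, the dyadic sum is bounded by
\[
K_k(\mu_a, x, x)^{\alpha - 1} \sum_{j \ge 0} 2^{j(n + 2a + 2a\alpha - \gamma)},
\]
which is a convergent geometric series as soon as $\gamma > n + 2a(1 + \alpha)$, giving precisely the desired bound. The main obstacle will be the pointwise Christoffel comparison, because it forces one to track the crossover between the interior regime, where $K_k(\mu_a, \cdot, \cdot) \simeq k^n / d(\cdot, \partial\B)^a$, and the boundary regime, where $K_k(\mu_a, \cdot, \cdot) \simeq k^{n+2a}$; the interpretation of $\rho$-balls as curved boxes tangent to $\partial\B$, together with the elementary AM--GM inequality applied to the normal extent $R^2/k^2 + (R/k)\sqrt{d(x,\partial\B)}$, is what makes this bookkeeping manageable.
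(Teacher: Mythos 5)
Your argument is correct, but it is worth noting that the paper does not prove this lemma at all: it is quoted verbatim from Petrushev--Xu \cite[Lemma 4.6]{petxu}, so you are supplying a self-contained proof where the paper only gives a citation. Your route (dyadic shells in the anisotropic metric, the doubling-type bound $\mu_a(B(x,R/k))\lesssim R^{n+2a}\mu_a(B(x,1/k))\simeq R^{n+2a}/K_k(\mu_a,x,x)$, and the pointwise Christoffel comparison $K_k(\mu_a,y,y)\lesssim (1+k\rho(x,y))^{2a}K_k(\mu_a,x,x)$) is essentially the standard proof of such estimates, resting on the same two facts the paper itself uses elsewhere, namely $\mu_a(B(x,r))\sim r^n(\sqrt{1-|x|^2}+r)^{2a}$ and $K_k(\mu_a,x,x)\simeq \min\bigl(k^n d(x,\partial\B)^{-a},k^{n+2a}\bigr)$; the benefit of writing it out is that the dependence $\gamma>n+2a(1+\alpha)$ becomes explicit. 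One small correction in your justification of the pointwise comparison: to bound $K_k(\mu_a,y,y)/K_k(\mu_a,x,x)$ you need $d(x,\partial\B)\lesssim d(y,\partial\B)+\rho(x,y)^2$ (the point $x$ controlled by the integration variable $y$), not the inequality $d(y,\partial\B)\lesssim d(x,\partial\B)+\rho(x,y)^2$ that you state. This is harmless because the estimate is symmetric: the function $x\mapsto\sqrt{d(x,\partial\B)}\simeq\sqrt{1-|x|^2}$ is Lipschitz with respect to $\rho$ (it is comparable to the latitude of the lifted point on the hemisphere), so $\bigl|\sqrt{d(x,\partial\B)}-\sqrt{d(y,\partial\B)}\bigr|\lesssim\rho(x,y)$, which gives both directions and hence $\bigl(d(x,\partial\B)\vee k^{-2}\bigr)\lesssim(1+k\rho(x,y))^2\bigl(d(y,\partial\B)\vee k^{-2}\bigr)$, exactly what your displayed comparison requires. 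With that reading, the shell-by-shell bookkeeping and the convergence of the geometric series are fine for every $\alpha>0$.
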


\section{main results}

\subsection{Separation}

In our first result we prove that for $\Lambda=\{ \Lambda_k \}$ interpolating there exist $\epsilon>0$ such that
$$\inf_{\lambda,\lambda'\in \Lambda_k,\lambda\neq \lambda'}\rho(\lambda,\lambda')\ge \frac{\epsilon}{k}.$$


%
%
%
%
%
%


\begin{thm}\label{separadas}
 If $\Omega$ is a smooth convex set and $\Lambda=\{ \Lambda_k \}$ is an interpolating 
sequence then there is an $\varepsilon>0$ such that the balls 
$\{B(\lambda,\varepsilon/k)\}_{\lambda\in \Lambda_k}$ are pairwise disjoint.
\end{thm}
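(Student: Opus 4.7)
The plan is to extract separation from the Riesz-Fischer half of the interpolating condition in Definition \ref{definterp}. Concretely, for each fixed $\lambda\in\Lambda_k$ the ability to interpolate the data $v_\lambda=\sqrt{\beta_k(\lambda)}$ and $v_{\lambda''}=0$ for all other $\lambda''\in\Lambda_k$ produces a polynomial $p_\lambda\in\P_k$ with
\[
p_\lambda(\lambda)=\sqrt{\beta_k(\lambda)},\qquad p_\lambda(\lambda'')=0\ \ \text{for } \lambda''\in\Lambda_k\setminus\{\lambda\},\qquad \|p_\lambda\|_{L^2(\mu)}^2\le C.
\]
If there existed some $\lambda'\in\Lambda_k$, $\lambda'\neq\lambda$, with $\rho(\lambda,\lambda')$ arbitrarily small compared to $1/k$, I would derive a contradiction by showing that the first and second conditions together become incompatible with this norm bound.

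The obstruction is quantified through a Bernstein-type inequality in the anisotropic metric $\rho$ introduced in Section \ref{metric}. Combining the pointwise reproducing estimate $|p(x)|\le\sqrt{\beta_k(x)}\,\|p\|_{L^2(\mu)}$ with the explicit Christoffel behaviour in Theorem \ref{thm:diagonalestimate}, for any $y\in B(\lambda,1/k)$ the quantities $\beta_k(y)$ and $\beta_k(\lambda)$ are comparable, so $|p_\lambda|$ is uniformly bounded by a constant multiple of $\sqrt{\beta_k(\lambda)}$ on the entire $\rho$-ball $B(\lambda,1/k)$. A Bernstein-type estimate of the form
\[
|p_\lambda(\lambda)-p_\lambda(\lambda')|\lesssim k\,\rho(\lambda,\lambda')\sup_{B(\lambda,1/k)}|p_\lambda|\lesssim k\,\rho(\lambda,\lambda')\sqrt{\beta_k(\lambda)}
\]
then forces $\sqrt{\beta_k(\lambda)}\lesssim k\rho(\lambda,\lambda')\sqrt{\beta_k(\lambda)}$, hence $\rho(\lambda,\lambda')\ge \varepsilon/k$ for a fixed $\varepsilon>0$, which is the claimed separation.

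The main obstacle is precisely the Bernstein inequality in the metric $\rho$ for a general smooth convex $\Omega$: unlike the case of the ball, where the localized kernels $L_k^a$ of Theorem \ref{theorem_PX} and estimate \eqref{integ_weight} give the Lipschitz bound for free, in a general convex body with the Lebesgue measure one must build the localization by hand. My plan is to use the osculating Euclidean ball construction from Section \ref{metric}: each boundary-adjacent point $x\in\Omega$ lies in a tangent Euclidean ball $\mathbb B(p,r)\subset\Omega$, and pulling back the standard polynomial Bernstein inequality on the sphere (as in \cite{BerOrt}) yields the required tangential and normal derivative bounds on the scales $1/k$ and $1/k^2+\sqrt{d(x,\partial\Omega)}/k$ that define the $\rho$-balls; far from the boundary the estimate reduces to the classical Markov inequality on a Euclidean ball. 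Once this localization step is in place, the contradiction argument itself is short and direct.
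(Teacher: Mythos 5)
Your plan is essentially the same strategy as the paper's: use the Riesz--Fischer half of the interpolating condition to build a polynomial of bounded norm that separates $\lambda$ from a putative neighbour $\lambda'$, and then invoke a Lipschitz-type estimate in the anisotropic metric $\rho$ to force $\rho(\lambda,\lambda')\gtrsim 1/k$. Your setup (the data $v_\lambda=1$, $v_{\lambda''}=0$, the norm bound $\|p_\lambda\|^2\le C$, the pointwise bound $|p_\lambda(x)|\le\sqrt{\beta_k(x)}\|p_\lambda\|$, and the comparability of $\beta_k$ on $\rho$-balls of radius $1/k$) is all sound.

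Where you diverge from the paper is in how the Lipschitz estimate is supplied, and this is where your argument has a genuine gap: you invoke a local Bernstein inequality
\[
|p_\lambda(\lambda)-p_\lambda(\lambda')|\lesssim k\,\rho(\lambda,\lambda')\sup_{B(\lambda,1/k)}|p_\lambda|
\]
for an arbitrary $p_\lambda\in\P_k$, but you only sketch how one would prove it, and such an estimate in the metric $\rho$ on a general convex body (or even on the tangent ball) is not a one-liner. The paper avoids having to prove any new Bernstein inequality by testing against the Petrushev--Xu localized kernel $L_k^{1/2}$ on a Euclidean ball $\B\subset\Omega$ tangent to $\partial\Omega$ at the projection of $\lambda$: writing
\[
1=p(\lambda)-p(\lambda')=\int_\B\bigl(L_k^{1/2}(\lambda,w)-L_k^{1/2}(\lambda',w)\bigr)p(w)\,dV(w),
\]
one bounds $|p(w)|\le\sqrt{\beta_k(w)/\beta_k(\lambda)}$, applies the Lipschitz estimate~\eqref{integ_weight} for the kernel (which is the ready-made substitute for your Bernstein inequality, valid specifically for $L_k^a$ rather than for all polynomials), and then concludes with the integral bound of Lemma~\ref{lem3_weight} with $\alpha=1$. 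So the paper's argument achieves your Lipschitz step in an averaged form, using only facts already established in \cite{petxu}. If you wish to pursue your direct route you must actually prove the local Bernstein inequality in the metric $\rho$; alternatively, replace it with the dual pairing against $L_k^{1/2}(\lambda,\cdot)-L_k^{1/2}(\lambda',\cdot)$ and use property~(5) of Theorem~\ref{theorem_PX} and Lemma~\ref{lem3_weight}, as the paper does.
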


\begin{proof}
Consider the metric in $\Omega$ defined in section \ref{metric}. We can restrict the argument to a ball, of a fixed radius $r(\Omega),$ in one of the two cases:
tangent to the boundary or at a positive distance to the complement $\R^n\setminus \Omega.$ 
Let us assume that there is another point from $\Lambda_k$, $\lambda' \in 
B(\lambda,\varepsilon/k)$. Since it is interpolating we can build a polynomial 
$p\in \mathcal P_k$ such that $p(\lambda') = 0$, $p(\lambda) = 1$ and
$\|p\|^2 \lesssim 1/K_k(\mu_{\frac{1}{2}},\lambda, \lambda)$. Take a ball $\Omega$ such 
that it contains $\lambda$ and $\lambda'$ and that it is tangent to $\partial 
\Omega$ at a closest point to $\lambda$.
To simplify the notation assume that radius of this ball is one, and it is denoted by $\B.$
In this ball the kernel $L_k^{\frac{1}{2}}$ from Theorem \ref{theorem_PX}, for the Lebesgue measure $a=\frac{1}{2},$ is 
reproducing so
\begin{equation} 		\label{eq1:separation}
1 = \int_\B (L_k^{\frac{1}{2}}(\lambda, w)-L_k^{\frac{1}{2}}(\lambda', w)) p(w) dV(w). 
\end{equation}
We can use the estimate $$|p(w)| \le \sqrt{\beta_k(\mu_{\frac{1}{2}},w)} \|p\| \le
\sqrt{\beta_k(\mu_{\frac{1}{2}},w)/\beta_k(\mu_{\frac{1}{2}},\lambda))}$$ and the inequality (\ref{integ_weight})
to obtain 
\[
 1 \lesssim k\rho(\lambda,\lambda') \int_\B \frac{\beta_k(\mu_{\frac{1}{2}},w) 
dV(w)}{(1+k\rho(y,\lambda))^\gamma},
\]
Taking $\alpha = 1$ and $a=\frac{1}{2}$ in Lemma~\ref{lem3_weight} we obtain $1 \lesssim 
k\rho(\lambda,\lambda')$
as stated.
\end{proof}


Observe that considering the general case $L_k^a$ in (\ref{eq1:separation}), one can prove the corresponding result
for 
interpolating sequences for  $\mathcal P_k$ with weight $d\mu_a(x)=(1-|x|^2)^{a-\frac{1}{2}}dV(x)$ in the ball $\mathbb B.$

\subsection{Carleson condition}

Let us deal with condition \eqref{Carlesonseq}. For a convex smooth set $\Omega \subset \R^n$
is a particular instance 
of the 
following definition.
\begin{definition}
A sequence of measures $\mu_k \in \mathcal M(\Omega)$ are called Carleson 
measures for $(\mathcal P_k,d\mu )$ if there is a constant $C>0$ such that
\[
 \int_{\Omega} |p(x)|^2\, d\mu_k(x) \le C \|p\|_{L^2(\mu)}^2,
\]
for all $p\in \mathcal P_k$.
\end{definition}
In particular if $\Lambda_k$ is a sequence of interpolating sets then the 
sequence of measures
$\mu_k = \sum_{\lambda\in \Lambda_k} \frac{\delta_\lambda}{\beta_k(\lambda)}$
is Carleson.

The geometric characterization of the Carleson measures when $\Omega$ is a 
smooth convex domain is in terms of anisotropic balls.

\begin{thm}\label{Carleson} A sequence of measures $\mu_k$ is Carleson for the polynomials 
$\P_k$ in a smooth bounded convex domain $\Omega$ if and only if there is a constant 
$C$ such that for all points $x\in \Omega$ 
\begin{equation}\label{geonec} 
\mu_k(B(x,1/k)) \le C V(B(x,1/k)).
\end{equation}

\end{thm}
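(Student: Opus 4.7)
The proof will split into necessity (the Carleson condition forces the geometric bound on anisotropic balls) and sufficiency (the geometric bound implies Carleson). The glue between the analytic and geometric sides is the identity $V(B(x,1/k)) \simeq 1/\beta_k(x)$, which one verifies by comparing the volume formula for anisotropic balls from Section \ref{metric} with the Christoffel estimate of Theorem \ref{thm:diagonalestimate}, case-splitting on whether $d(x,\partial\Omega) \ge 1/k^2$ or $d(x,\partial\Omega) < 1/k^2$. With this in hand, condition \eqref{geonec} is equivalent to $\mu_k(B(x,1/k)) \lesssim 1/\beta_k(x)$, and the whole question becomes one of reproducing-kernel analysis.

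For necessity, I would test the Carleson inequality against the well-localized polynomials of Petrushev--Xu. Fixing $x\in\Omega$ and taking $p = L_\ell^{1/2}(x,\cdot)\in \mathcal P_k$ with $\ell = \lfloor k/2\rfloor$, property (6) of Theorem \ref{theorem_PX} yields $|p|^2 \gtrsim \beta_k(x)^2$ on an anisotropic ball $B(x,\varepsilon/k)$, while property (4) combined with Lemma \ref{lem3_weight} (applied with $\alpha=1$ and $a=1/2$) gives $\|p\|^2 \lesssim \beta_k(x)$. Plugging into the Carleson inequality yields $\mu_k(B(x,\varepsilon/k)) \lesssim 1/\beta_k(x)$, and covering $B(x,1/k)$ by a bounded number of balls of radius $\varepsilon/k$ upgrades this to the scale $1/k$.

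For sufficiency, I would run a Schur test. The reproducing identity \eqref{reproducing} for $L_k^{1/2}$ shows that the embedding $\mathcal P_k \hookrightarrow L^2(\mu_k)$ is dominated by the integral operator $Tf(y) = \int L_k^{1/2}(y,w) f(w)\,dV(w)$ viewed from $L^2(dV)$ to $L^2(d\mu_k)$. Testing with $h(x) = \sqrt{\beta_k(x)}$, the first Schur inequality
$$\int |L_k^{1/2}(y,w)|\, h(w)\, dV(w) \lesssim h(y)$$
follows directly from property (4) and Lemma \ref{lem3_weight} with $\alpha = 1$. The second Schur inequality
$$\int |L_k^{1/2}(y,w)|\, h(y)\, d\mu_k(y) \lesssim h(w)$$
is where the geometric hypothesis enters: discretizing $\Omega$ by a maximal $1/k$-separated net $\{y_j\}$ with associated finite-overlap balls $B_j = B(y_j,1/k)$, the hypothesis together with $V(B_j)\simeq 1/\beta_k(y_j)$ reduces the integral to a multiple of $\sqrt{\beta_k(w)}\sum_j (1+k\rho(y_j,w))^{-\gamma}$, which is in turn comparable to a Lemma \ref{lem3_weight} integral and so bounded by $\sqrt{\beta_k(w)}$. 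Schur's test then closes the argument.

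The main technical hurdle is exactly this discretization. One has to verify that on each $B_j$ the weight $\beta_k$ and the off-diagonal profile $(1+k\rho(\cdot,w))^{-\gamma}$ vary only by a bounded factor, so that passing between the integral against $d\mu_k$ and the sum over the net costs only a universal constant; this is the regularity encoded in the anisotropic metric of Section \ref{metric} together with the Lipschitz property (5) of Theorem \ref{theorem_PX}. The two volume–Christoffel cases ($d(x,\partial\Omega)$ above or below $1/k^2$) must also be handled uniformly, but this is routine once the comparability $V(B(x,1/k))\simeq 1/\beta_k(x)$ has been nailed down.
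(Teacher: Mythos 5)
Your sufficiency argument is essentially the paper's proof in different clothing: the paper's step of averaging the off-diagonal bound over $B(x,1/k)$, applying Fubini, invoking the hypothesis \eqref{geonec}, and finishing with Lemma~\ref{lem3_weight} ($\alpha=1/2$) is exactly a Schur-type estimate with the weight $h=\sqrt{\beta_k}$, and the comparability $V(B(x,1/k))\simeq 1/\beta_k(x)$ you use is also implicit there. The one thing you should make explicit is that there is no Petrushev--Xu kernel for $\Omega$ itself: the reproducing identity \eqref{reproducing} lives on a ball, so for each point $y$ you must reproduce $p$ (or $p^2$, as the paper does) from the inscribed Euclidean ball $\B_y\subset\Omega$ of uniform radius tangent to $\partial\Omega$ at the point nearest $y$, and then check that the ball's Christoffel function and metric are comparable to those of $\Omega$ at the relevant points. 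With that caveat the sufficiency goes through.

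The necessity direction, however, has a genuine gap. You test with $p=L_\ell^{1/2}(x,\cdot)$ and claim $\|p\|^2_{L^2(\Omega)}\lesssim \beta_k(x)$ from property (4) and Lemma~\ref{lem3_weight}; but those estimates control $p$ only on the ball with respect to which the kernel is built. If that ball is inscribed in $\Omega$ (tangent at the nearest boundary point, which is what matches the anisotropy of $B(x,1/k)$), the Carleson inequality still charges you the norm over all of $\Omega$, and outside the inscribed ball a degree-$k$ polynomial concentrated near $x$ can be enormous (exponentially large in $k$), so $\|p\|^2_{L^2(\Omega)}$ is not bounded by $\beta_k(x)$. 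You cannot fix this by using a ball containing $\Omega$: a ball enclosing $\Omega$ and tangent to $\partial\Omega$ at the point nearest $x$ need not exist with uniform radius for a smooth convex domain (flat boundary pieces have no lower curvature bound), and a non-tangent enclosing ball produces the wrong geometry near $\partial\Omega$ --- it only yields $\mu_k$ of an isotropic $1/k$-box $\lesssim k^{-n}$, whereas you need the bound by $V(B(x,1/k))\simeq k^{-n}\bigl(k^{-1}+\sqrt{d(x,\partial\Omega)}\bigr)$, which is smaller near the boundary. This is precisely why the paper encloses $\Omega$ in a \emph{cube} $Q$ of fixed size with a face tangent to $\partial\Omega$ at the point nearest $x$ (always possible), and tests with a tensor product of one-dimensional localized kernels: then $\|Q_k^x\|^2_{L^2(\Omega)}\le \|Q_k^x\|^2_{L^2(Q)}$ is computable by Fubini and matches the anisotropic peak on $B(x,1/k)$. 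Without this (or an equivalent) device, your necessity argument does not close.
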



\begin{proof}
We prove the necessity. For any $x\in\Omega$ there is a cube $Q$ that contains 
$\Omega$ which is tangent to $\partial\Omega$ at a closest point to $x$ as in 
the picture:
\begin{figure}
\includegraphics[width=4cm]{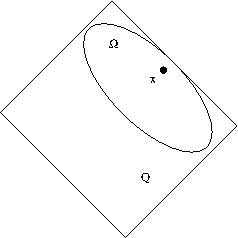}
\caption{}
\end{figure}
This cube has fixed dimensions independent of the point $x\in\Omega$. We can 
construct a polynomial $Q_k^x$ of degree at most $k n$ taking the product of one 
dimensional polynomials $L_k^{\frac{1}{2}}$. We test against these 
polynomials that peak at $B(x,1/k)$ 
$$\int_{B(x,1/k)} |Q_k^x|^2 d\mu_k \le \int_{\Omega} |Q_k^x|^2 d\mu_k\le C \| Q_k^x \|_{L^{2}(Q)},$$
by property $(6)$ in Theorem \ref{theorem_PX} and the estimate (\ref{kernelball}) the necessary condition follows.

For the sufficiency we use the reproducing property of  $L_k^{\frac{1}{2}}(z,y)$. That is for 
any point $x\in \Omega$ there is a Euclidean ball $\B_x$ contained in $\Omega$ 
such that $x\in \B_x$ and it is tangent to $\partial \Omega$ in the closest 
point to $x$ as in the picture. Moreover since $\Omega$ is a smoothly bounded 
convex domain we can assume that the radius $\B$ has a lower bound independent 
of $x$. In this ball we can reconstruct any polynomial $p\in \mathcal{P}_k$ using $L_k^{\frac{1}{2}}$. That 
is 
\[
 \int_\Omega |p(x)|^2 \, d\mu_k(x) \le
 \int_\Omega \left|\int_{B_x} L_{2k}^{\frac{1}{2}}(x,y) p^2(y)\, dV(y)\right|\, d\mu_k(x).
\]
We use the estimate (\ref{offdiag}) and we get
\[
  \int_\Omega |p(x)|^2 \, d\mu_k(x) \lesssim \int_\Omega\int_{B_x} 
\frac{\sqrt{\beta_k(x)\beta_k(y)}}{(1+ k 
\rho(x,y))^\gamma} |p(y)|^2 dV(y)\, d\mu_k(x).
\]
We break the integral in two regions, when $\rho(x,y) < 1 $ and otherwise. When 
$k$ is big enough we obtain:
\[\begin{split}
  \int_\Omega |p(x)|^2 \, d\mu_k(x) \le 
  &\int_\Omega\int_{B_x \cap\rho(x,y) > 1}
  |p(y)|^2 dV(y) d\mu_k(x) + \\
  &C\int_\Omega\int_{B_x\cap\rho(x,y) < 
1} 
\frac{\sqrt{\beta_k(x)\beta_k(y)}}{(1+ k 
\rho(x,y))^\gamma} |p(y)|^2 dV(y)\, d\mu_k(x)
\end{split}
\]
The first integral in the right hand side is bounded by $\int_\Omega 
|p(y)|^2\, dV(y)$ since $\mu_k(\Omega)$ is bounded by hypothesis (it is 
possible to cover $\Omega$ by balls $\{B(x_n,1/k)\}$ with controlled overlap). 

In the second integral, observe that if $w\in B(x,1/k)$ then $\rho(w,x)\le 1/k$ 
and therefore
\[
\frac{\sqrt{\beta_k(x)\beta_k(y)}}{(1+ k 
\rho(x,y))^\gamma} \lesssim
\frac 1{V(B(x,1/k))} \int_{B(x,1/k)} \frac{\sqrt{\beta_k(w)\beta_k(y)}}{(1+ k 
\rho(w,y))^\gamma} dV(w).  
\]
We plug this inequality in the second integral and we can bound it by
\[
 C\int_\Omega |p(y)|^2 \int_{\rho(w,y) < 2}\frac{\sqrt{\beta_k(w)\beta_k(y)}}{(1+ k 
\rho(w,y))^\gamma} \frac {\mu_k(B(w,1/k))}{V(B(w,1/k))}dV(w) dV(y).
\]
We use the hypothesis \eqref{geonec} and Lemma~\ref{lem3_weight} with $\alpha = 
1/2$ to bound it finally by $C\int_\Omega |p(y)|^2dV(y)$.

\end{proof}


The weighted case in the unit ball is simpler.


\begin{thm}\label{Carleson en bolas} 
Let $d\mu_a (x)=(1-|x|^2)^{a-\frac{1}{2}}dV(x)$ for $a\ge 0$ the weight in the unit ball $\B\subset \R^n.$
A sequence of measures $\{ \mu_k \}$ are Carleson for $(\P_k,\mu_a)$ if there is a constant 
$C$ such that for all points $x\in \B$ 
\begin{equation} \mu_k(B(x,1/k)) \le C\; \mu_a(B(x,1/k)).
\end{equation}
\end{thm}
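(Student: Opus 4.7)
The plan is to imitate the sufficiency argument of Theorem \ref{Carleson}, but the proof will be substantially cleaner because the Petrushev–Xu kernels $L_k^{a}$ from Theorem \ref{theorem_PX} are globally reproducing on the whole ball against $d\mu_a$. There is therefore no need to glue local reproducing formulae on inscribed Euclidean balls as was required for a general convex domain.

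First, fix $p\in\mathcal{P}_k$ and observe that $|p|^2\in\mathcal{P}_{2k}$, so by the reproducing property \eqref{reproducing} applied to $L_{2k}^{a}$,
\[
|p(x)|^2 = b_n^a \int_{\mathbb{B}} L_{2k}^{a}(x,y)\,|p(y)|^2\,d\mu_a(y).
\]
Integrating against $d\mu_k$, bounding $|L_{2k}^{a}|$ via the off-diagonal estimate \eqref{offdiag}, and using $\beta_{2k}(\mu_a,\cdot)\simeq\beta_k(\mu_a,\cdot)$ (which follows from \eqref{kernelball}), Fubini gives
\[
\int_{\mathbb{B}}|p|^2\,d\mu_k \;\lesssim\; \int_{\mathbb{B}}|p(y)|^2\, I_k(y)\,d\mu_a(y),
\]
where
\[
I_k(y) := \int_{\mathbb{B}} \frac{\sqrt{\beta_k(\mu_a,x)\beta_k(\mu_a,y)}}{(1+k\rho(x,y))^{\gamma}}\,d\mu_k(x).
\]
It remains to show $I_k(y)\lesssim 1$ uniformly in $y$.

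To handle $I_k$, I discretize: cover $\mathbb{B}$ by anisotropic balls $B(w_j,1/k)$ with bounded overlap. Inside each $B(w_j,1/k)$, both $\beta_k(\mu_a,\cdot)$ and the factor $(1+k\rho(\cdot,y))^{-\gamma}$ are essentially constant (the first by the standard Bernstein-type estimate embedded in Theorem \ref{theorem_PX}, the second by the triangle inequality), so I can replace $x$ by $w_j$ on each ball. Using the hypothesis $\mu_k(B(w_j,1/k))\le C\mu_a(B(w_j,1/k))$ and then converting the sum back to an integral against $d\mu_a$, I obtain
\[
I_k(y)\;\lesssim\; \int_{\mathbb{B}} \frac{\sqrt{\beta_k(\mu_a,w)\beta_k(\mu_a,y)}}{(1+k\rho(w,y))^{\gamma}}\,d\mu_a(w).
\]
Finally, applying Lemma~\ref{lem3_weight} with $\alpha=1/2$ yields
\[
I_k(y)\;\lesssim\;\sqrt{\beta_k(\mu_a,y)}\cdot\frac{1}{\sqrt{\beta_k(\mu_a,y)}}=1,
\]
and hence $\int|p|^2\,d\mu_k\lesssim\|p\|_{L^2(\mu_a)}^2$, as required.

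The main technical point — and the only real obstacle — is the pointwise-to-integral passage controlling $I_k(y)$: one has to verify that $\beta_k(\mu_a,\cdot)$ and $(1+k\rho(\cdot,y))^{-\gamma}$ are genuinely slowly varying on scale $1/k$ in the anisotropic metric $\rho$. For $\beta_k(\mu_a,\cdot)$ this is guaranteed by \eqref{kernelball} together with the fact that $d(x,\partial\mathbb{B})$ varies by at most a constant factor on $B(x,1/k)$; for the denominator it is immediate from the triangle inequality. Once this is in place, the rest of the argument is a direct transcription of the corresponding step in the proof of Theorem \ref{Carleson}, and in fact simpler because the global reproducing identity removes any need for the splitting into $\rho(x,y)\lessgtr 1$ that was used there.
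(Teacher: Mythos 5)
Your proof is correct and takes essentially the same route as the paper, whose proof of this direction simply defers to the sufficiency argument of Theorem~\ref{Carleson} ``with the obvious changes'': reproduce $|p|^2$ by $L_{2k}^{a}$, apply the off-diagonal bound \eqref{offdiag} with $\beta_{2k}\simeq\beta_k$, use the hypothesis at scale $1/k$, and finish with Lemma~\ref{lem3_weight} for $\alpha=1/2$. Your only deviations --- exploiting the global reproducing identity on $\B$ to dispense with the inscribed balls and the splitting $\rho(x,y)\lessgtr 1$, and discretizing with a bounded-overlap cover of anisotropic balls instead of the in-place averaging over $B(x,1/k)$ --- are precisely those obvious changes, carried out correctly.
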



\begin{proof}
Supose $\{ \mu_k \}$ are Carleson. Then for any $x\in \B$
\begin{equation}
 \int_{B(x,1/k)} |L_k^a(x,w)|^2\, d\mu_k(w) \le C \|L_k^a(x,\cdot) \|^2_\mu.
\end{equation}
By property (\ref{diagonal_weight}) in Theorem \ref{theorem_PX} and the estimate
$$K_k(\mu,x,x)\le \|L_k^a(x,\cdot) \|^2_\mu\le K_{2k}(\mu,x,x) ,$$
the result follows. The necessity follows exactly like in the unweighted case with the obvious changes.
\end{proof}


\subsection{Density condition}

In \cite[Theorem 4]{BerOrt} a necessary density condition for 
sampling sequences for polynomials in convex domains was obtained. It states 
the following:

\begin{thm} \label{densitysamp}
Let $\Omega$ be a smooth convex domain in $\R^n$, and let $\Lambda$ be a 
sampling sequence. Then for any $\B(x,r) \subset \Omega$ the following 
holds:
\[
 \limsup_{k\to\infty} \frac{\#\Lambda_k \cap \B(x,r)}{\dim \mathcal P_k}\ge 
\mu_{eq}(\B(x,r)).
\]
Here $\mu_{eq}$ is the equilibrium measure associated to $\Omega$.
\end{thm}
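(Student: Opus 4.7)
The plan is to combine the frame property coming from sampling with the pluripotential-theoretic asymptotic
\[
\frac{\beta_k(y)}{\dim\P_k}\, dV(y) \longrightarrow d\mu_{eq}(y)\qquad\text{weakly, as }k\to\infty,
\]
which for smooth convex domains follows from Bergman-kernel/Bernstein--Markov theory (compare \cite{BedTay,blmr} and the discussion around (\ref{kernelconvex})). This is the mechanism that converts on-diagonal Christoffel information into the equilibrium measure, and it is the only deep input besides sampling itself.

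Fix $x\in\Omega$ and $r>0$ with $\B(x,r)\Subset\Omega$, choose a small $\eta>0$, and consider on $\P_k$ the self-adjoint Toeplitz-type operator $A=\Pi M_{\chi_{\B(x,r)}}\Pi$, where $\Pi$ is the orthogonal projection onto $\P_k$ inside $L^2(\Omega,dV)$. A direct computation yields $\tr A=\int_{\B(x,r)}\beta_k(y)\,dV(y)$, which by the weak asymptotic above is $\sim \dim\P_k\cdot\mu_{eq}(\B(x,r))$. Applying the right-hand side of (\ref{def:sampling}) to $A^{1/2}p$ and tracing,
\[
\tr A\le C\sum_{\lambda\in\Lambda_k}\langle A\kappa_{k,\lambda},\kappa_{k,\lambda}\rangle=C\sum_{\lambda\in\Lambda_k}\int_{\B(x,r)}|\kappa_{k,\lambda}(y)|^2\,dV(y).
\]
Split this sum according to whether $\lambda\in\B(x,r+\eta)$ or not. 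For nearby $\lambda$ use the trivial bound $\int|\kappa_{k,\lambda}|^2\le 1$. For distant $\lambda$, transfer the off-diagonal decay (\ref{offdiag}) from the model balls of Section \ref{metric} to $\Omega$ to get $\int_{\B(x,r)}|\kappa_{k,\lambda}|^2\lesssim(k\,d(\lambda,\B(x,r)))^{-\gamma}$; summed over the separated sequence $\Lambda_k$ this contribution is $o(\dim\P_k)$ once $\gamma$ is taken large. Dividing through by $\dim\P_k$, then letting $k\to\infty$ and $\eta\to 0$ (legitimate since $\mu_{eq}$ assigns no mass to $\partial\B(x,r)$ in the interior of $\Omega$), yields the claim, at least up to the multiplicative constant $C$.

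The main obstacle is twofold. First, removing the multiplicative constant requires a genuine Toeplitz/Berezin calculus: replace $\chi_{\B(x,r)}$ by a smooth symbol, use that the Toeplitz calculus is asymptotically a $*$-homomorphism, and compare $\tr A$ with an actual spectral count via Szeg\H{o}-type asymptotics for $A$. Second, the off-diagonal estimate (\ref{offdiag}) is formulated only inside a tangent Euclidean ball; globalizing it to $\Omega$ demands a partition-of-unity patching that leverages convexity and smoothness of $\partial\Omega$ to guarantee that overlapping model balls produce compatible kernel bounds, with errors kept small enough to preserve $o(\dim\P_k)$ throughout the summation over distant $\lambda$.
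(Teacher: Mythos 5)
A preliminary remark: Theorem~\ref{densitysamp} is not proved in this paper at all; it is quoted from \cite[Theorem 4]{BerOrt}, so there is no internal proof to compare with. The proof in \cite{BerOrt} does run through the concentration operator $A=\Pi M_{\chi_{\B(x,r)}}\Pi$ that you introduce, so your starting point is the right one, but as written your argument has a genuine gap that you yourself flag and then do not close: the frame inequality \eqref{def:sampling} applied to $A^{1/2}p$ and traced gives only $\tr A\le C\sum_{\lambda}\langle A\kappa_{k,\lambda},\kappa_{k,\lambda}\rangle$, hence, after the near/far splitting, a lower bound of the form $\#(\Lambda_k\cap\B(x,r+\eta))\gtrsim \tfrac1C\,\dim\P_k\,\mu_{eq}(\B(x,r))+o(\dim\P_k)$. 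The whole content of the theorem is the sharp constant $1$; a bound ``up to the multiplicative constant $C$'' is a strictly weaker statement. Removing $C$ is not a cosmetic refinement: one must count eigenvalues of $A$ rather than its trace (Landau/Szeg\H{o}-type asymptotics showing that $A$ has $\dim\P_k\,\mu_{eq}(\B(x,r))+o(\dim\P_k)$ eigenvalues close to $1$, plus a comparison of $\#(\Lambda_k\cap\B(x,r+\eta))$ with that eigenvalue count using the sampling inequality), and none of this is carried out in your sketch. So what you have proved is a density bound with an unspecified constant, not Theorem~\ref{densitysamp}.

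A second, independent gap is the far-point estimate. The decay \eqref{offdiag} is a property of the Petrushev--Xu mollified kernels $L_k^a$ of Theorem~\ref{theorem_PX}, not of the reproducing kernel $K_k$ itself; the orthogonal projection kernel does not enjoy rapid off-diagonal decay (this is precisely why the localized kernels $L_k^a$ are constructed), and no analogue of \eqref{offdiag} for $K_k$ on a general convex $\Omega$ is available in the paper, so the bound $\int_{\B(x,r)}|\kappa_{k,\lambda}|^2\lesssim (k\,d(\lambda,\B(x,r)))^{-\gamma}$, in particular for $\lambda$ accumulating near $\partial\Omega$ where $\beta_k(\lambda)\sim k^{n+1}$, is an unproved and nontrivial claim, and ``transferring'' it from tangent model balls is not a routine patching argument. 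By contrast, note that the analogous density statement this paper does prove (Theorem~\ref{density}, for interpolation) avoids kernel decay altogether: it compares the counting measure of $\Lambda_k$ with the Berezin-type measure $\frac1{\dim\P_k}\mathcal K_k(x,x)\,d\mu$ in Vaserstein distance, using a (signed) transport plan and the variance bound of \cite[Theorem 17]{BerOrt}, together with $\frac{\beta_k}{\dim\P_k}dV\to\mu_{eq}$ from \cite{BBN11}. Either that transport mechanism or the eigenvalue-counting argument of \cite{BerOrt} is what is needed to make your outline into a proof of the stated inequality.
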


Let us see 
how, with a similar technique, a corresponding density condition can be 
obtained as well in the case of interpolating sequences.

\begin{thm} \label{density}
Let $\Omega$ be a smooth convex domain in $\R^n$, and let $\Lambda$ be an 
interpolating sequence. Then for any $\B(x,r) \subset \Omega$ the following 
holds:
\[
 \limsup_{k\to\infty} \frac{\#\Lambda_k \cap \B(x,r)}{\dim \mathcal P_k}\le 
\mu_{eq}(\B(x,r)).
\]
Here $\mu_{eq}$ is the equilibrium measure associated to $\Omega$.
\end{thm}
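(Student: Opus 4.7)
The plan is to adapt the Landau--Beurling concentration/trace argument used in \cite{BerOrt} to prove the sampling density Theorem~\ref{densitysamp}, now running it against the interpolating condition instead. Three ingredients drive the proof: (a) an off-diagonal decay estimate for the normalized reproducing kernels $\kappa_{k,\lambda}$, obtained from the well-localized polynomials of Theorem~\ref{theorem_PX} together with the Christoffel bound Theorem~\ref{thm:diagonalestimate}; (b) the Riesz-sequence property of $\{\kappa_{k,\lambda}\}_{\lambda\in\Lambda_k}$ coming from the interpolating hypothesis; and (c) the weak convergence $\beta_k(y)\,dV(y)/N_k \to d\mu_{eq}(y)$, a consequence of the Bernstein--Markov property for $(\Omega,dV)$ established in \cite{BerOrt}.

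Fix $\delta>0$, set $B^{+}:=\B(x,r+\delta)\subset\Omega$, $N_k:=\dim\P_k$, and $n_k:=\#\bigl(\Lambda_k\cap \B(x,r)\bigr)$. The first technical step is to show that for every $\lambda\in\B(x,r)$ and every $M>0$ (with implicit constant depending on $M$),
\[
\int_{\Omega\setminus B^{+}}|\kappa_{k,\lambda}(y)|^2\,dV(y)\;\lesssim\;(k\delta)^{-M}.
\]
This is obtained by localizing via balls tangent to $\partial\Omega$ (as in Section~\ref{metric}) and transferring the decay \eqref{offdiag} of $L_k^{1/2}$ to $K_k$ inside such a tangent ball, together with Lemma~\ref{lem3_weight} (adapted to the convex setting). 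Put $V:=\mathrm{span}\{\kappa_{k,\lambda}:\lambda\in\Lambda_k\cap\B(x,r)\}$; by interpolation, $\dim V=n_k$ and $\bigl\|\sum c_\lambda \kappa_{k,\lambda}\bigr\|^2\ge C^{-1}\sum |c_\lambda|^2$. Pointwise Cauchy--Schwarz together with the above decay give, for any $v=\sum c_\lambda\kappa_{k,\lambda}\in V$,
\[
\int_{\Omega\setminus B^{+}}|v(y)|^2\,dV(y)\;\lesssim\; n_k\,(k\delta)^{-M}\sum|c_\lambda|^2\;=\;o(1)\,\|v\|^2
\]
once $M$ is chosen larger than $n$ (recall $n_k\le N_k\lesssim k^n$). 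Equivalently, the concentration/Toeplitz operator $T_k:=P_k\,M_{B^{+}}\,P_k$ on $\P_k$ satisfies $\langle T_k v,v\rangle\ge (1-o(1))\|v\|^2$ for every $v\in V$.

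Since $\dim V=n_k$ and $0\le T_k\le I$, the Courant--Fischer principle forces $\lambda_{n_k}(T_k)\ge 1-o(1)$, so
\[
n_k(1-o(1))\;\le\;\sum_{j=1}^{n_k}\lambda_j(T_k)\;\le\;\tr(T_k)\;=\;\int_{B^{+}}\beta_k(y)\,dV(y).
\]
The Bernstein--Markov property gives $\tr(T_k)/N_k\to\mu_{eq}(B^{+})$; dividing by $N_k$, letting $k\to\infty$, and then $\delta\to 0^{+}$ (using the absolute continuity $d\mu_{eq}\simeq d(\cdot,\partial\Omega)^{-1/2}\,dV$ from \cite{BedTay,blmr}), yields the claim $\limsup_k n_k/N_k\le\mu_{eq}(\B(x,r))$.

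The principal obstacle is the super-polynomial off-diagonal decay claimed in the first step: the paper only records such a bound for the Petrushev--Xu kernel $L_k^{1/2}$ on the unit ball, and transferring it to the genuine reproducing kernel $K_k$ on a general smooth convex $\Omega$ requires the careful local reduction to tangent balls from Section~\ref{metric}. Once that is in hand the Landau-type argument is standard; importantly, the sharp constant $1$ (as opposed to a Riesz-constant-dependent bound) is only achieved because the super-polynomial decay absorbs the Riesz upper bound into the $o(1)$ error in the second display.
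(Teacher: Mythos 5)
The fatal step is your very first one: the claimed super-polynomial decay $\int_{\Omega\setminus B^{+}}|\kappa_{k,\lambda}|^2\,dV\lesssim (k\delta)^{-M}$ is not just unproved in the paper, it is false for the genuine reproducing kernel. The fast decay \eqref{offdiag} is a property of the Petrushev--Xu kernels $L_k^a$, which are mollified kernels of degree $2k$ built with the smooth filter $\hat a$; it does not pass to the orthogonal projection kernel $K_k$, whose off-diagonal decay is intrinsically only of first order because it corresponds to a sharp spectral cutoff (the Dirichlet-kernel phenomenon). Already for $n=1$, $\Omega=(-1,1)$ and Lebesgue measure, the Christoffel--Darboux formula gives $|K_k(x,y)|\lesssim |x-y|^{-1}$ in the bulk while $K_k(x,x)\simeq k$, so $\int_{|y-x|>\delta}|\kappa_{k,x}(y)|^2\,dy\simeq (k\delta)^{-1}$ and nothing better; no reduction to tangent balls can improve this, since reproducing $K_k(\lambda,\cdot)$ through $L_k^{1/2}$ of a tangent ball localizes around the output variable, not around $\lambda$, and gives no decay in $|y-\lambda|$. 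With only $O((k\delta)^{-1})$ tail mass your termwise Cauchy--Schwarz bound becomes $n_k(k\delta)^{-1}\simeq k^{n-1}\delta^{-1}\sum|c_\lambda|^2$, which is far from $o(1)\|v\|^2$; hence the concentration estimate $\langle T_kv,v\rangle\ge(1-o(1))\|v\|^2$ on the $n_k$-dimensional span is not established, Courant--Fischer yields nothing, and the sharp constant $1$ (the whole point of the theorem) is lost. The remaining ingredients (the trace identity $\tr T_k=\int_{B^{+}}\beta_k\,dV$, the convergence $\tr T_k/N_k\to\mu_{eq}(B^{+})$, and the limit $\delta\to0$) are fine, but they sit on top of the missing step.

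This is precisely the ``technical hurdle'' the paper sidesteps by taking a different route: instead of a Landau concentration argument requiring pointwise off-diagonal decay of $K_k$, it uses only an \emph{averaged} localization statement, namely $\frac{1}{\dim\P_k}\iint_{\Omega\times\Omega}|x-y|^2|K_k(x,y)|^2\,dV\,dV=O(1/k)$ from \cite{BerOrt}. Concretely, it introduces the biorthogonal system $g_\lambda$ to $\{\kappa_\lambda\}$ in the span $F_k$, builds a (signed) transport plan between $\sigma_k=\frac1{\dim\P_k}\sum_\lambda\delta_\lambda$ and $\nu_k=\frac1{\dim\P_k}\mathcal K_k(x,x)\,dV$, and shows the Vaserstein distance tends to $0$; the only extra input is that separation (from interpolation) makes $\mu_k=\sum_\lambda\delta_\lambda/\beta_k(\lambda)$ a Carleson measure for $\P_{k+1}$, needed because the transport estimate involves polynomials of degree $k+1$. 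The conclusion then follows from $\mathcal K_k(x,x)\le K_k(x,x)$ and the weak-* convergence of $\beta_k\,dV/\dim\P_k$ to $\mu_{eq}$. If you want to salvage a Landau-type argument you would need a genuinely new localization input (e.g.\ second-moment bounds for each individual $\kappa_\lambda$ combined with off-diagonal control of the Gram-type matrix $\langle M_{\Omega\setminus B^{+}}\kappa_\lambda,\kappa_{\lambda'}\rangle$), none of which is available in the paper.
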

\begin{remark}
 In the statements of Theorems~\ref{densitysamp} and \ref{density} we could 
have replaced $\mathbb B(x, r)$ by any open set, in particular they could have been formulated with balls 
$B(x,r)$ in the anisotropic metric.
\end{remark}


\begin{proof}
Let $F_k\subset \P_k$ be the subspace spanned by 
\[
 \kappa_\lambda(x)=K_k(\lambda,x)/\sqrt{\beta_k(\lambda)}\qquad \forall
\lambda \in \Lambda_k.
\] 
Denote by $g_\lambda$ the dual (biorthogonal) basis to $\kappa_\lambda$ in
$F_k$. We 
have clearly that 
\begin{itemize}
\item We can span any function in $F_k$ in terms of $\kappa_\lambda$, thus:
\[
 \sum_{\lambda\in \Lambda_k} \kappa_\lambda(x)g_\lambda(x)=\mathcal K_k(x,x),
\]
where $\mathcal K_k(x,y)$ is the reproducing 
kernel of the subspace $F_k$.  
\item The norm of $g_\lambda$ is uniformly bounded since $\kappa_\lambda$ was a 
uniform Riesz sequence.
\item $g_\lambda(\lambda)=\sqrt{\beta_k(\lambda)}$. This is due to the 
biorthogonality and the reproducing property.
\end{itemize}
We are going to prove that the measure $\sigma_k=\frac 1{\dim \mathcal
P_k}\sum_{\lambda\in 
\Lambda_k}\delta_\lambda$, 
and the measure $\nu_k=\frac 1{\dim \mathcal P_k} \mathcal K_k(x,x)d\mu(x)$ 
are very 
close to each other. This are two positive measures that are not probability 
measures but they have the same mass (equal to $\frac{\#\Lambda_k}{\dim 
\mathcal P_k}\le 1$). Therefore, there is a way to quantify the closeness  
through the Vaserstein $1$-distance. For an introduction to Vaserstein distance 
see for instance \cite{Villani}. We want to prove that
$W(\sigma_k,\nu_k)\to 0$ because the Vaserstein distance metrizes the 
weak-* topology.

In this case, it is known that $\mathcal K_k(x,x)\le K_k(x,x)$ and 
$\frac 1{\dim \mathcal P_k} 
\beta_k(x)\, 
\to \mu_{eq}$ in the weak-* topology, where $\mu_{eq}$ is the normalized 
equilibrium measure associated to $\Omega$ (see for instance \cite{BBN11}).  Therefore,
$\limsup_k \sigma_k\le \mu_{eq}$.

In order to prove that $W(\sigma_k,\nu_k)\to 0$ we use a non positive 
transport plan as in \cite{LevOrt}:
\[
 \rho_k(x,y)=\frac 1{\dim \mathcal P_k} \sum_{\lambda\in \Lambda_k} 
\delta_\lambda(y) 
\times g_\lambda(x)\kappa_\lambda(x)\,d\mu(x)
\]
It has the right marginals, $\sigma_k$ and $\nu_k$ 
and we can estimate the integral 
\[
W(\sigma_k,\nu_k)\le
\iint_{\Omega \times \Omega} |x-y|d|\rho_k|=O(1/\sqrt{k}).
\]
The only point that merits a clarification is that we need an inequality:
\[
\begin{split}  \frac 1{\dim \mathcal P_k} \sum_{\lambda\in \Lambda_k} 
\int_\Omega
|\lambda-x|^2 
\frac{|K_k(\lambda,x)|^2}{ K_k(x,x)}\,d\mu(x)\le \\ 
\frac 1{\dim \mathcal P_k} 
\iint_{\Omega \times 
\Omega} |y-x|^2 
|K_k(y,x)|^2\, d\mu(x)d\mu(y).
\end{split}
\]
This is problematic. We know that $\Lambda_k$ is an interpolating 
sequence for the polynomials of degree $k$. Thus the normalized reproducing 
kernels at $\lambda\in\Lambda_k$ form a Bessel sequence for $\P_k$ but the 
inequality that we need is applied to $K_k(x,y)(y_i-x_i)$ for all $i=1,\ldots, 
n$. That is to a 
polynomial of degree $k+1$. We are going to show that if $\Lambda_k$ is an 
interpolating sequence for the polynomials of degree $k$ it is also a Carleson 
sequence for the polynomials of degree $k+1$. 

Observe that since it is interpolating then it is uniformly separated, i.e. 
$B(\lambda, \varepsilon/k)$ are disjoint. That means that in particular 
$$\mu_{k} (B(z, 1/(k+1)) \lesssim V(B(z,1/(k+1)).$$ Thus $\mu_k$  is a 
Carleson measure for $\P_{k+1}$.

Finally in \cite[Theorem~17]{BerOrt} it was proved that 
\[
\frac 1{\dim \mathcal P_k} 
\iint_{\Omega \times 
\Omega} |y-x|^2 
|K_k(y,x)|^2\, d\mu(x)d\mu(y) = O(1/k).
\]
\end{proof}

From the behaviour on the diagonal of the kernel \eqref{kernelball}
its easy to check that the kernel is both Bernstein-Markov (sub-exponential) and has moderate growth, see definitions in \cite{BerOrt}. From the characterization for sampling sequences
proved in \cite[Theorem 1]{BerOrt} and with the obvious changes in the proof of the previous theorem we deduce the following:


\begin{thm}						\label{teo:density_ball}
Consider the space of polynomials $\mathcal P_k$ restricted to the ball $\B\subset \R^n$ with the measure $d\mu_a(x)=(1-|x|^2)^{a-\frac{1}{2}}dV.$
Let $\Lambda=\{\Lambda_k \}$ be a sequence sets of points in $\B.$
\begin{itemize}
 \item If $\Lambda$ is a sampling sequence
 \[
 \liminf_{k\to\infty} \frac{\# ( \Lambda_k \cap \B(x,r) )}{\dim \mathcal P_k}\ge 
 \mu_{eq}(\B(x,r)).
\]
\item If $\Lambda$ is interpolating
 \[
 \limsup_{k\to\infty} \frac{\# ( \Lambda_k \cap \B(x,r) )}{\dim \mathcal P_k}\le 
 \mu_{eq}(\B(x,r)).
\]
\end{itemize}
\end{thm}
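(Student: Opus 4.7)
The plan is to treat the two items separately but in parallel with the convex-domain arguments already given, the sampling bound being essentially a citation and the interpolating bound being a direct adaptation of the proof of Theorem \ref{density}.

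First I would verify that the weight $d\mu_a$ falls inside the framework of \cite{BerOrt}. The diagonal estimate \eqref{kernelball} together with the metric $\rho$ of Section \ref{metric} gives that $\beta_k(\mu_a,x)$ is comparable to $1/\mu_a(B(x,1/k))$, and in particular is sub-exponential in $k$, so the Bernstein--Markov property holds, and $\beta_k(\mu_a,x)/\beta_k(\mu_a,y)$ is of moderate growth on balls of radius $1/k$. With these two properties checked the sampling statement is exactly \cite[Theorem 1]{BerOrt} applied to $\mu_a$: the equilibrium measure appearing there is the pluripotential equilibrium measure of $\B$, so $\liminf_k \#(\Lambda_k\cap \B(x,r))/\dim\P_k\ge \mu_{\rm eq}(\B(x,r))$.

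For the interpolating statement I would replay the proof of Theorem \ref{density} almost verbatim, with $dV$ replaced by $d\mu_a$ and the polynomials $L_k^{1/2}$ replaced by the Petrushev--Xu polynomials $L_k^a$ of Theorem \ref{theorem_PX}. More precisely, let $F_k$ be the span of the normalized kernels $\kappa_\lambda=K_k(\mu_a,\lambda,\cdot)/\sqrt{\beta_k(\mu_a,\lambda)}$, let $g_\lambda$ be the dual basis in $F_k$ (uniformly $L^2(\mu_a)$-bounded by the Riesz property), and set
\[
 \sigma_k=\frac1{\dim\P_k}\sum_{\lambda\in\Lambda_k}\delta_\lambda,\qquad
 \nu_k=\frac1{\dim\P_k}\mathcal{K}_k(x,x)\,d\mu_a(x),
\]
where $\mathcal{K}_k$ is the reproducing kernel of $F_k$. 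Using the transport plan
\[
 \rho_k(x,y)=\frac1{\dim\P_k}\sum_{\lambda\in\Lambda_k}\delta_\lambda(y)\times g_\lambda(x)\kappa_\lambda(x)\,d\mu_a(x),
\]
the same Cauchy--Schwarz calculation reduces control of the Vaserstein distance to the two second-moment bounds
\[
 \frac1{\dim\P_k}\sum_{\lambda}\int|\lambda-x|^2\frac{|K_k(\mu_a,\lambda,x)|^2}{K_k(\mu_a,x,x)}\,d\mu_a(x)
 \quad\text{and}\quad
 \frac1{\dim\P_k}\iint|y-x|^2|K_k(\mu_a,y,x)|^2\,d\mu_a(x)d\mu_a(y),
\]
both of which are $O(1/k)$: the second by \cite[Theorem 17]{BerOrt} applied to the weight $\mu_a$ (its hypotheses being the same Bernstein--Markov and moderate growth already checked), and the first by dominating it with the second, for which one needs to know that $\mu_k=\sum_\lambda\delta_\lambda/\beta_k(\mu_a,\lambda)$ is a Carleson measure for $\P_{k+1}$. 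This last point follows as in Theorem \ref{density}: the separation result (obtained from Theorem \ref{separadas} with $L_k^a$ in place of $L_k^{1/2}$ as indicated in the remark after that theorem) yields $\mu_k(B(z,1/(k+1)))\lesssim \mu_a(B(z,1/(k+1)))$ and then Theorem \ref{Carleson en bolas} gives the Carleson property. Together with $W(\sigma_k,\nu_k)\to 0$ and the fact that $\frac{1}{\dim\P_k}\beta_k(\mu_a,x)d\mu_a(x)\to \mu_{\rm eq}$ weak-$*$ (as in \cite{BBN11}, using again that $\mu_a$ is Bernstein--Markov), one concludes $\limsup_k\sigma_k\le \mu_{\rm eq}$, which is the desired density inequality.

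The main obstacle I expect is not any of the individual steps but rather the bookkeeping at the degree boundary: the transport-plan argument naturally wants to integrate $|x-y|^2|K_k|^2$, which is a polynomial of degree $k+1$ in each variable, whereas the Bessel hypothesis is only available at degree $k$. The resolution is exactly the Carleson-at-degree-$k+1$ device above, and it carries over cleanly because separation is a metric statement and Theorem \ref{Carleson en bolas} only requires a density estimate at scale $1/k$. Everything else is a faithful transcription of the Lebesgue/convex argument, the only genuinely new input being the weighted off-diagonal bound \eqref{offdiag} for $L_k^a$, which is already provided by Theorem \ref{theorem_PX}.
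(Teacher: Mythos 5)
Your proposal is correct and follows essentially the same approach the paper takes: citing \cite[Theorem 1]{BerOrt} for the sampling bound after verifying the Bernstein--Markov and moderate-growth hypotheses from \eqref{kernelball}, and adapting the Vaserstein transport-plan argument of Theorem~\ref{density} for the interpolating bound, replacing $L_k^{1/2}$ by $L_k^a$ and using Theorem~\ref{Carleson en bolas} together with the separation remark after Theorem~\ref{separadas} to handle the degree-$(k+1)$ Carleson step. You have in fact supplied the details that the paper compresses into ``with the obvious changes in the proof of the previous theorem.''
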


\begin{remark}
One can construct interpolation or sampling
sequences with density arbitrary close to the critical density with 
sequences of points $\{\Lambda_k\}$ such that the corresponding Lagrange interpolating polynomials are uniformly bounded.
In particular de above inequalities are sharp, for a similar construction on the sphere see \cite{MOC10}.
\end{remark}

\subsection{Orthonormal basis of reproducing kernels}

Sampling and interpolation are somehow dual concepts. Sequences which are both sampling and interpolating (i.e. complete interpolating sequences)
are optimal in some sense because they are at the same time minimal sampling sequences and maximal interpolating sequences. 
They will satisfy the equality in Theorem \ref{teo:density_ball}.
In general domains, to prove or disprove the existence of such sequences is a difficult problem \cite{OUbook}. 

If 
$\Lambda=\{ \Lambda_k \}$ is a complete interpolating sequence
the corresponding reproducing kernels $\{ \kappa_{k,\lambda} \}$ is a Riesz basis in the space of polynomials (uniformly in the degree).
An obvious example of complete interpolating sequences would be sequences
providing an orthonormal basis of reproducing kernels. In dimension 1, with the weight $(1-x^2)^{a-1/2},$
a basis of Gegenbauer polynomials $\{ G^{(a)}_j \}_{j=0,\dots , k}$ is orthogonal and the reproducing kernel in $\mathcal P_k$ evaluated at the zeros of the polynomial $G^{(a)}_{k+1}$
gives an orthogonal sequence. In our last result we prove that
for greater dimensions there are no  orthogonal basis of $\mathcal P_k$ of reproducing kernels
with the measure $d\mu_a(x)=(1-|x|^2)^{a-1/2}dV(x).$




Our first goal is to show that sampling sequences are dense enough, Theorem \ref{noempty}. Recall that in the bulk (i.e. at a fixed positive distance from the boundary) the Euclidean 
metric and the metric $\rho$ are equivalent. In our first result we prove that the right hand side of (\ref{def:sampling}) and the separation imply that 
there are points of the sequence in any ball (of the bulk) of big enough radius.


\begin{prop}				\label{prop_bound}
Let $d\mu_a (x)=(1-|x|^2)^{a-\frac{1}{2}}dV(x)$ for $a\ge 0$ the weight in the unit ball $\B\subset \R^n.$
    Let $\Lambda_k\subset \B$ be a finite subset and $C,\epsilon>0$ be constants such that
\begin{equation}
\int_\B |P(x)|^2 d\mu_a(x) \le C \sum_{\lambda\in\Lambda_k} \frac{|P(\lambda)|^2}{K_{k}(\mu_a;\lambda,\lambda)}, 
\end{equation}
  for all $P\in \mathcal{P}_k$
and 
$$\inf_{\substack{\lambda,\lambda'\in \Lambda_k \\ \lambda\neq \lambda' } }\rho(\lambda,\lambda')\ge \frac{\epsilon}{k}.$$ 
Let $|x_0|=C_0<\frac{1}{4},$ $\epsilon<M$ and $k\ge 1$ be such that
$\Lambda_k \cap \mathbb{B}(x_0,M/k)=\emptyset.$ Then $M<A$ for a certain constant $A$ depending only on $C,\epsilon,n$ and $a.$
\end{prop}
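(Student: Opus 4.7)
The plan is to test the hypothesized lower frame bound against a polynomial concentrated near $x_0$. Because $|x_0|<1/4$ places $x_0$ in the bulk with $d(x_0,\partial\B)>3/4$, I would choose the Petrushev--Xu kernel $P(x)=L_{\lfloor k/2\rfloor}^{a}(x,x_0)$, which by item~(1) of Theorem~\ref{theorem_PX} lies in $\mathcal{P}_k$. From item~(6) and the Christoffel estimate~\eqref{kernelball}, $P(x_0)\simeq k^n$; a matching lower bound $\|P\|_{L^2(\mu_a)}^{2}\gtrsim k^n$ then follows either from Cauchy--Schwarz applied to the reproducing property $P(x_0)=\langle P,K_k(\mu_a;x_0,\cdot)\rangle$, or by integrating $|P|^2$ over the small ball where item~(6) guarantees $|P|\simeq k^n$.

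To control the right-hand side of the hypothesis, I would use the off-diagonal bound~\eqref{offdiag} together with $K_{\lfloor k/2\rfloor}(\mu_a;\lambda,\lambda)\simeq K_k(\mu_a;\lambda,\lambda)$ (which follows from~\eqref{kernelball}) to obtain
\[
\frac{|P(\lambda)|^2}{K_k(\mu_a;\lambda,\lambda)}\lesssim \frac{k^n}{(1+k\rho(\lambda,x_0))^{2\gamma}}.
\]
Since $|x_0|<1/4$, the Euclidean ball $\mathbb{B}(x_0,M/k)$ contains the anisotropic ball $B(x_0,cM/k)$, so the hole hypothesis forces $\rho(\lambda,x_0)\ge cM/k$ for every $\lambda\in\Lambda_k$. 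The $\epsilon/k$-separation makes the balls $B(\lambda,\epsilon/(2k))$ disjoint, each of $\mu_a$-measure $\simeq 1/K_k(\mu_a;\lambda,\lambda)$, and both $K_k(\mu_a;y,y)$ and $1+k\rho(y,x_0)$ are essentially constant on each. Comparing the sum with the corresponding integral and using $(1+k\rho(y,x_0))^\gamma\gtrsim M^\gamma$ on the region $\{\rho(y,x_0)\ge c'M/k\}$ gives
\[
\sum_{\lambda\in\Lambda_k}\frac{|P(\lambda)|^2}{K_k(\mu_a;\lambda,\lambda)}\lesssim \frac{k^n}{M^\gamma}\int_{\B}\frac{K_k(\mu_a;y,y)}{(1+k\rho(y,x_0))^\gamma}\,d\mu_a(y)\lesssim \frac{k^n}{M^\gamma},
\]
the last step being Lemma~\ref{lem3_weight} with $\alpha=1$, provided $\gamma$ is chosen large enough.

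Combining both bounds with the hypothesis yields $k^n\lesssim \|P\|_{L^2(\mu_a)}^{2}\le C\,k^n/M^\gamma$, hence $M^\gamma\lesssim C$, which produces the required bound $M\le A=A(C,\epsilon,n,a)$. The only range not directly covered is when $M/k$ is so large that $\mathbb{B}(x_0,M/k)$ swallows a fixed bulk neighborhood of $x_0$; but then the sum on the right-hand side of the hypothesis, tested against the same $P$, is zero while the left-hand side is strictly positive, which contradicts the inequality directly.

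The main technical obstacle is making the discrete-to-integral comparison genuinely uniform in $\lambda\in\B$: one has to verify that $V_{\mu_a}(B(\lambda,\epsilon/(2k)))\simeq 1/K_k(\mu_a;\lambda,\lambda)$ and that $K_k(\mu_a;y,y)$ remains essentially constant on $B(\lambda,\epsilon/(2k))$, \emph{including} when $\lambda$ approaches $\partial\B$, where the measure $\mu_a$ and the Christoffel function both degenerate. This relies on the precise form of~\eqref{kernelball} and on the doubling properties of $\mu_a$ with respect to the anisotropic metric $\rho$; ensuring that the implicit constants do not deteriorate there is what ties the whole argument together.
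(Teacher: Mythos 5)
Your proposal is correct and follows essentially the same path as the paper: the same test polynomial $L^{a}_{[k/2]}(\cdot,x_0)$, the lower bound $\|P\|^2\gtrsim k^n$, the off-diagonal decay \eqref{offdiag}, and the $\epsilon/k$-separation to convert the sum over $\Lambda_k$ into an integral over disjoint anisotropic balls. The only difference is the endgame: you factor out $M^{\gamma}$ and apply Lemma~\ref{lem3_weight} with $\alpha=1$, whereas the paper splits the sum at $|\lambda-x_0|=1/2$ and computes the resulting integral explicitly with $\gamma=n+a$; both routes give $M\le A(C,\epsilon,n,a)$.
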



\proof
  By the construction of function $L_\ell^a (x,y),$ it is clear that for any $\ell\ge 0$
$$K_\ell (\mu_a; x,x)\le \int_{\B }L_\ell^a (x,y)^2 d\mu_a (y)\le   K_{2\ell}(\mu_a; x,x).$$
Let $P(x)=L^a_{[k/2]}(x,x_0)\in \mathcal{P}_k.$ From the property above, the hypothesis and Proposition \ref{prop:Christoffel} we get
\begin{equation}
k^n \sim K_{[k/2]} (\mu_a; x_0,x_0)\le   \int_{\B } P(y)^2 d\mu_a (y)\lesssim 
 \sum_{|\lambda-x_0|>M/k} \frac{|P(\lambda)|^2}{K_k(\mu_a ;\lambda,\lambda)}.
\end{equation}
From \cite[Lemma 11.3.6.]{DX13}, given $x\in \B$ and $0<r<\pi$ 
\begin{equation}
\mu_a (B(x,r))\sim r^n (\sqrt{1-|x|^2}+r)^{2a},
\end{equation}
  and therefore 
\begin{equation}
\mu_a(B(x,r))\sim
\begin{cases}
      r^{n+2a}& \mbox{if}\;\; 1-|x|^2<r^2, \\
      r^n (1-|x|^2)^a & \mbox{otherwise},
\end{cases}
\end{equation}
and
\begin{equation}\label{measure_ball}
\mu_a (B(x,r))\gtrsim
\begin{cases}
      r^{n+2 a}& \mbox{if}\;\; |x|>\frac{1}{2}, \\
      r^n  & \mbox{otherwise}.
\end{cases}
\end{equation}

From (4) in Theorem \ref{theorem_PX}, the separation of the sequence, and the estimate \eqref{measure_ball} we get
\begin{equation}
\begin{split}
 0 & <c  \le   \sum_{|\lambda-x_0|>M/k}  \frac{1}{(1+[k/2]\rho(x_0,\lambda))^{2\gamma}}
 \\
 &
 =
\sum_{|\lambda-x_0|>M/k} \frac{1}{ \mu_a (B(\lambda,\epsilon/2k))}  \int_{B(\lambda,\epsilon/2k)} \frac{d \mu_a (x)}{(1+[k/2]\rho(x_0,\lambda))^{2\gamma}} 
\\
&
\lesssim 
\left[ \sum_{\frac{M}{k}<|\lambda-x_0|<\frac{1}{2}}+ \sum_{\frac{1}{2}<|\lambda-x_0|} \right] \frac{1}{ \mu_a (B(\lambda,\epsilon/2k))}  \int_{B(\lambda,\epsilon/2k)} 
\frac{d \mu_a (x)}{(1+ 2 k \rho(x_0,x))^{2 \gamma}} 
\\
&
\lesssim 
\left(\frac{k}{\epsilon}\right)^n \int_{\frac{M}{k}}^{\frac{3}{4}} \frac{r^{n-1}}{(k r)^{2\gamma}}dr + 
\frac{k^{2a +n-2 \gamma}}{\epsilon^{2 a +n}} \mu_a (B(0,1/2)^c).
\end{split}
\end{equation}
Now, for $\gamma=n+a$ we get
$$0<c\le \frac{1}{k^{n+2a}}\left[ -\frac{1}{r^{n+2a}}  \right]_{r=\frac{M}{k}}^{\frac{3}{4}}+ \frac{1}{k^n} ,$$
and then a uniform (i.e. independent of $k$) upper bound for $M <A=A(C,\epsilon,n,a).$
\qed



\begin{prop}				\label{noempty}
    Let $\Lambda=\{\Lambda_k\}$ be a separated sampling sequence for $\B\subset \R^n.$ Then there exist
$M_0,k_0>0$ such that for any $M>M_0$ and all $k\ge k_0$
$$\# \left( \Lambda_k \cap \mathbb{B}(0,M/k)\right)\sim M^n.$$
\end{prop}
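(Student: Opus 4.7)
The plan is to prove $\#(\Lambda_k\cap \mathbb{B}(0,M/k))\sim M^n$ by establishing matching upper and lower bounds, each with implicit constants independent of $M$ and $k$. The upper bound will come from the separation hypothesis, and the lower bound from the contrapositive of Proposition \ref{prop_bound}. Throughout I work in the regime where $M_0\le M$, $k\ge k_0$, and $M/k$ stays below a small fixed constant (say $M/k\le 1/8$), so that $\mathbb{B}(0,M/k)$ lies well inside the bulk of $\mathbb{B}$. In this regime $\sqrt{1-|x|^2}$ is bounded below, and therefore the anisotropic metric $\rho$ of Section \ref{metric} is uniformly comparable to the Euclidean metric.

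For the upper bound, the $\rho$-separation $\rho(\lambda,\lambda')\ge \epsilon/k$ transfers on $\mathbb{B}(0,M/k)$ to Euclidean separation $|\lambda-\lambda'|\ge c\epsilon/k$ for some absolute $c>0$. Consequently the Euclidean balls $\mathbb{B}(\lambda,c\epsilon/(2k))$ indexed by $\lambda\in \Lambda_k\cap \mathbb{B}(0,M/k)$ are pairwise disjoint and contained in $\mathbb{B}(0,(M+c\epsilon/2)/k)$. A direct Lebesgue volume comparison then gives $\#(\Lambda_k\cap \mathbb{B}(0,M/k))\cdot (c\epsilon/(2k))^n\lesssim ((M+c\epsilon/2)/k)^n$, from which $\#(\Lambda_k\cap \mathbb{B}(0,M/k))\lesssim M^n$ as soon as $M\ge \epsilon$.

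For the lower bound, Proposition \ref{prop_bound} applied in the contrapositive produces a constant $A=A(C,\epsilon,n,a)>\epsilon$ such that $\mathbb{B}(x_0,A/k)\cap \Lambda_k\neq\emptyset$ whenever $|x_0|\le 1/4$. Set $M_0:=2A$; for $M\ge M_0$ pick inside $\mathbb{B}(0,(M-A)/k)$ a maximal Euclidean $2A/k$-separated family of points $x_1,\dots,x_N$. A standard volume-packing argument gives $N\gtrsim (M/A)^n\simeq M^n$. By construction the Euclidean balls $\mathbb{B}(x_i,A/k)$ are pairwise disjoint and contained in $\mathbb{B}(0,M/k)$; since each contains at least one point of $\Lambda_k$ by Proposition \ref{prop_bound}, we obtain $\#(\Lambda_k\cap \mathbb{B}(0,M/k))\ge N\gtrsim M^n$, as desired.

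The only point that needs care is delimiting the regime of validity: I need $M/k$ bounded above by a small absolute constant so that the centers $x_i$ chosen in the packing satisfy $|x_i|\le 1/4$ (so Proposition \ref{prop_bound} applies to each of them) and so that the metric comparison $\rho\simeq |\cdot-\cdot|$ is uniform on $\mathbb{B}(0,M/k)$. This is automatic if $k_0$ is chosen large enough for the prescribed $M_0$ and if one restricts to $M\le \delta k$ for a small $\delta$; all implicit constants in $\sim$ are then independent of $M$ and $k$. There is no serious technical obstacle — the one subtlety is keeping this regime in mind throughout.
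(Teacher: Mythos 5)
Your proposal is correct and follows essentially the same route as the paper: the upper bound by a volume-packing argument using the separation (with the $\rho$-to-Euclidean comparison in the bulk), and the lower bound by packing $\mathbb{B}(0,M/k)$ with disjoint balls of radius $A/k$ and applying Proposition~\ref{prop_bound} to each center. Your explicit bookkeeping of the regime $|x_i|\le 1/4$, $M/k$ small is the same restriction the paper imposes via $\mathbb{B}(0,M/k)\subset\mathbb{B}(0,1/4)$.
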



\proof
    Let $\epsilon>0$ be the constant from the separation, i.e. 
$$\inf_{\substack{\lambda,\lambda'\in \Lambda_k \\ \lambda\neq \lambda' } }\rho(\lambda,\lambda')\ge \frac{\epsilon}{k}.$$ 
Assume that $M/k\le \frac{1}{2}.$ For $\lambda\in \Lambda_k \cap \mathbb{B}(0,M/k)$ we have 
$V(\B(\lambda,\frac{\epsilon}{k}))\sim (\frac{\epsilon}{k})^n$ and therefore
\begin{equation}
\# \left( \Lambda_k \cap \mathbb{B}(0,M/k)\right) \left(\frac{\epsilon}{k}\right)^n \lesssim \left(\frac{M}{k}\right)^n.
\end{equation}
For the other inequality, take the constant $A$ (assume $A>\epsilon$) given in Proposition \ref{prop_bound} depending on the sampling and the separation constants of $\Lambda$ and $n.$ For $M>A$ and $k>0$ such that
$\mathbb{B}(0,\frac{M}{k})\subset \mathbb{B}(0,\frac{1}{4})$ one can find $N$ disjoint balls $\mathbb{B}(x_j,\frac{A}{k})$ for $j=1,\dots N$ included in $\mathbb{B}(0,M/k)$ and
such that 
$$N V(\mathbb{B}(0,\frac{A}{k}))>\frac{1}{2} V(\mathbb{B}(0,\frac{M}{k})).$$
Observe that each ball $\mathbb{B}(x_j,\frac{A}{k})$ contains by Proposition \ref{prop_bound} at least one point from $\Lambda_k$ and therefore 
$$\# \left( \Lambda_k \cap \mathbb{B}(0,M/k)\right)\ge N \gtrsim \left(\frac{M}{A}\right)^n.$$

\qed


We will use the following result from \cite{Fug01}.

\begin{thm}                                                                                         \label{Fuglede-theorem}
    Let $\mathbb{B} \subset \mathbb{R}^{n},$ $n>1,$ be the unit ball. There do not exist infinite subsets
    $\Lambda\subset \mathbb{R}^{n}$ such that the exponentials $e^{i\langle x, \lambda \rangle},$ $\lambda \in \Lambda,$ are pairwise
    orthogonal in $L^{2}(\B ).$ Or, equivalently, there do not exist infinite subsets
    $\Lambda\subset \R^{n}$ such that $|\lambda-\lambda'|$ is a zero of $J_{n/2},$ the Bessel function
    of order $n/2,$ for all distinct $\lambda,\lambda'\in \Lambda.$      
\end{thm}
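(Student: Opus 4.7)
The plan is to argue by contradiction: first translate orthogonality in $L^{2}(\B)$ into a Bessel-zero condition, and then exploit the rigidity of the resulting distance set for $n>1$.

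For the equivalence, a direct computation in polar coordinates, together with the standard integral representation of Bessel functions, yields
\[
\int_{\B} e^{i\langle x,\mu\rangle}\,dx=\frac{(2\pi)^{n/2}}{|\mu|^{n/2}}\,J_{n/2}(|\mu|),\qquad \mu\in\R^{n}\setminus\{0\}.
\]
The $L^{2}(\B)$ inner product $\langle e^{i\langle\cdot,\lambda\rangle},e^{i\langle\cdot,\lambda'\rangle}\rangle$ equals this expression at $\mu=\lambda-\lambda'$, so it vanishes exactly when $|\lambda-\lambda'|$ is a positive zero of $J_{n/2}$. The two formulations are therefore equivalent, and it suffices to handle the second one.

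Now suppose, toward a contradiction, that $\Lambda\subset\R^{n}$ is infinite with $|\lambda-\lambda'|\in\{j_{n/2,k}\}_{k\geq 1}$ for all distinct $\lambda,\lambda'\in\Lambda$. Since $j_{n/2,1}>0$, the set $\Lambda$ is uniformly separated and, being infinite, unbounded. After a translation assume $0\in\Lambda$, so each nonzero $\lambda\in\Lambda$ sits on a sphere of radius $j_{n/2,k(\lambda)}$, and the polarization identity
\[
\langle\lambda,\lambda'\rangle=\tfrac12\bigl(|\lambda|^{2}+|\lambda'|^{2}-|\lambda-\lambda'|^{2}\bigr)
\]
expresses every mutual inner product as a rational linear combination of squared Bessel zeros.

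The core of the argument plays two rigidity constraints against each other. Dimensionally, any $n+2$ vectors in $\R^{n}$ have a singular Gram matrix, so every $(n+1)\times(n+1)$ Cayley--Menger minor formed from the pairwise distances of an $(n+2)$-tuple of points of $\Lambda$ must vanish; applied to all such tuples this gives an infinite family of exact polynomial identities whose entries are squared zeros of $J_{n/2}$. Arithmetically, the McMahon expansion $j_{n/2,k}=\pi(k+\tfrac{n-1}{4})+O(1/k)$ places these zeros close to, but for $n\geq 2$ generically off, an arithmetic progression. Feeding an infinite subsequence of $\Lambda$ tending to infinity into the Cayley--Menger identities forces exact nontrivial polynomial relations among distinct $j_{n/2,k}$, which the asymptotic expansion then rules out for $n\geq 2$. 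The main obstacle is precisely this last arithmetic step, since for $n=1$ the zeros are exactly $\pi\Z$ and $\Lambda=\pi\Z$ is admissible; any valid proof must therefore genuinely use an algebraic-independence or transcendence property of the positive zeros of $J_{n/2}$ that is special to $n\geq 2$, and this is the nontrivial input that Fuglede supplies in \cite{Fug01}.
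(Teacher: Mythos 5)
The paper does not prove Theorem~\ref{Fuglede-theorem}: it is stated as a black-box citation of Fuglede \cite{Fug01}. So what you are really being asked is whether your proposal constitutes an independent proof of Fuglede's result, and it does not.

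The first part of your proposal is fine. The identity
$\int_{\B} e^{i\langle x,\mu\rangle}\,dx=(2\pi)^{n/2}|\mu|^{-n/2}J_{n/2}(|\mu|)$
is the standard Fourier transform of the ball indicator, and it correctly reduces orthogonality of exponentials to the statement that all pairwise distances lie in the zero set of $J_{n/2}$. This equivalence, however, is already asserted in the theorem statement itself; it is not the content to be proved.

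The second part, which is the actual theorem, is not proved. You write that the Cayley--Menger identities ``force exact nontrivial polynomial relations among distinct $j_{n/2,k}$, which the asymptotic expansion then rules out for $n\ge 2$,'' but this is a wish, not an argument. The Cayley--Menger vanishing gives, for each $(n+2)$-tuple, a polynomial equation in the squared distances that is exactly satisfied because the points really do live in $\R^n$; there is no a priori contradiction to extract. And the McMahon expansion gives only an asymptotic approximation of the zeros, which by itself cannot preclude an exact polynomial relation; you would need a quantitative lower bound on how far a specific polynomial expression in the $j_{n/2,k}$ stays from zero, and you never produce one. You then concede the point explicitly: ``this is the nontrivial input that Fuglede supplies.'' That sentence is precisely the admission that the core of the argument is missing. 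For comparison, Fuglede's actual argument is not via Cayley--Menger determinants at all: it exploits the uniform separation and unboundedness of $\Lambda$ to locate nearly collinear triples (a pigeonhole argument in a thin cone), so the triangle inequality becomes a near-equality $c\approx a+b$ among Bessel zeros, and then uses the fact that for $\nu=n/2>1/2$ the consecutive gaps $j_{\nu,k+1}-j_{\nu,k}$ are strictly larger than $\pi$ and approach $\pi$, which together with the fractional shift $\nu/2-1/4$ in McMahon's formula keeps $a+b-c$ uniformly bounded away from $0$. None of that quantitative gap analysis appears in your sketch. As written, your proposal is a plausible-sounding outline with the decisive step left blank, not a proof.
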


Following ideas from \cite{Fug74} we can prove now our main result about orthogonal basis. A similar argument can be used on the sphere to study tight spherical designs.

\begin{thm}				\label{noONbasis}
  Let $\B\subset \R^n$ be the unit ball and $n>1.$ There is no sequence of finite sets $\Lambda=\{ \Lambda_k \}\subset \B$ such that the reproducing kernels 
$\{K_k(\mu: x,\lambda) \}_{\lambda\in \Lambda_k}$ form an orthogonal basis of $\mathcal{P}_k$ with respect to the measure $d\mu_a=(1-|x|^2)^{a-\frac{1}{2}}dV$.
\end{thm}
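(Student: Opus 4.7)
The plan is to reduce the statement to Fuglede's theorem (Theorem~\ref{Fuglede-theorem}) via a local rescaling of the kernel around an interior point of $\mathbb{B}$. Assume for contradiction that such a sequence $\Lambda = \{\Lambda_k\}$ exists. Normalizing, $\{\kappa_{k,\lambda}\}_{\lambda \in \Lambda_k}$ is then an orthonormal basis of $\mathcal{P}_k$ (uniformly in $k$), so $\Lambda$ is simultaneously sampling and interpolating for $(\mathcal{P}_k, d\mu_a)$. In particular, the orthogonality translates to the pointwise identity
\[
K_k(\mu_a; \lambda, \lambda') = 0 \qquad \text{for all } \lambda \ne \lambda' \in \Lambda_k.
\]

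The first step is to extract a limiting point configuration in $\mathbb{R}^n$. By Theorem~\ref{separadas}, $\Lambda$ is uniformly $\rho$-separated, and since the $\rho$-metric is comparable to the Euclidean one near $0$, we have $|\lambda - \lambda'| \gtrsim \epsilon/k$ for distinct $\lambda, \lambda' \in \Lambda_k$. Proposition~\ref{noempty} then gives $\#(\Lambda_k \cap \mathbb{B}(0, M/k)) \simeq M^n$ for $M > M_0$ and $k$ large. Set $\widetilde{\Lambda}_k = k\,\Lambda_k$; these are $\epsilon$-separated subsets of $\mathbb{R}^n$ with $\#(\widetilde{\Lambda}_k \cap \mathbb{B}(0, M)) \simeq M^n$. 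A standard diagonal compactness argument based on the uniform separation produces a subsequence along which $\widetilde{\Lambda}_{k_j}$ converges, in the Hausdorff sense on every compact set, to an infinite, $\epsilon$-separated set $\Lambda_\infty \subset \mathbb{R}^n$.

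The second step is the bulk scaling limit of the Christoffel--Darboux kernel near the origin. Using the explicit expansion of $K_k(\mu_a; \cdot, \cdot)$ in terms of Gegenbauer-type polynomials on the ball (see \cite{petxu, DX13}), for any fixed $u, v \in \mathbb{R}^n$,
\[
\lim_{k \to \infty} \frac{1}{\beta_k(\mu_a, 0)}\, K_k\!\left(\mu_a; \tfrac{u}{k}, \tfrac{v}{k}\right)
 = \mathbf{K}_\infty(u, v) := c_n\, \frac{J_{n/2}(|u-v|)}{|u-v|^{n/2}},
\]
uniformly on compact sets of $\mathbb{R}^n \times \mathbb{R}^n$ (after an innocuous dilation absorbed into the choice of scale). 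The right-hand side is the reproducing kernel of the Paley--Wiener space of $L^2$-functions whose Fourier transform is supported in the unit ball, and in particular $\mathbf{K}_\infty(u, v) = 0$ if and only if $|u - v|$ is a zero of the Bessel function $J_{n/2}$.

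Combining the two steps, the orthogonality $K_{k_j}(\mu_a; \lambda, \lambda') = 0$ for distinct $\lambda, \lambda' \in \Lambda_{k_j}$ passes to the limit and yields
\[
\mathbf{K}_\infty(\tilde{\lambda}, \tilde{\lambda}') = 0 \qquad \text{for all distinct } \tilde{\lambda}, \tilde{\lambda}' \in \Lambda_\infty.
\]
Hence $|\tilde{\lambda} - \tilde{\lambda}'|$ is a zero of $J_{n/2}$ for every pair of distinct points in the infinite set $\Lambda_\infty \subset \mathbb{R}^n$, which directly contradicts Theorem~\ref{Fuglede-theorem} (Fuglede) for $n > 1$. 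The expected main obstacle is the second step: making the bulk universality limit of $K_k(\mu_a; \cdot, \cdot)$ rigorous and uniform enough on compact sets to allow the off-diagonal orthogonality relations to pass to the limit. Once this universality is granted, the geometric conversion to Fuglede's setup and the final contradiction are straightforward.
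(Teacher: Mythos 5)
Your argument is essentially the paper's own proof: uniform separation plus Proposition~\ref{noempty} to extract, after rescaling by $k$, an infinite uniformly separated limit configuration; bulk universality of the Christoffel--Darboux kernel at interior points to turn the vanishing of $K_k(\mu_a;\lambda,\lambda')$ into vanishing of the Bessel-type limit kernel; and Fuglede's theorem to reach the contradiction. The universality step you flag as the main obstacle is exactly what the paper settles by citing Kro\'o--Lubinsky \cite[Theorem 1.7]{KL13}, so no new argument is needed there.
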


\proof[Theorem \ref{noONbasis}]

The following result can be easily deduced from \cite[Theorem 1.7]{KL13}:

Given $\{u_k\}_k,\{v_k\}_k$ convergent sequences in $\R^n$ and
$u_k\to u, v_k\to v,$ when $k\to \infty.$ Then
$$\lim_{k\to \infty} \frac{K_k(\mu;\frac{u_k}{k},\frac{v_k}{k})}{K_k(\mu;0,0)}=\frac{J^*_{n/2}(|u-v|)}{J^*_{n/2}(0)}.$$

Let $\Lambda_k$ be such that $\{\kappa_{\lambda}\}_{\lambda \in \Lambda_k}$ is an orthonormal basis of 
$\mathcal{P}_k$ with respect to the measure $d\mu_a=(1-|x|^2)^{a-\frac{1}{2}}dV$. Then
$$K_k(\mu;\lambda_{(k)},\lambda_{(k)}')=0,$$
for $\lambda_{(k)}\neq \lambda_{(k)}'\in \Lambda_k.$

 We know that $\Lambda_k$ is uniformly separated for some $\epsilon>0$
$$\rho(\lambda_{(k)},\lambda_{(k)}')\ge \frac{\epsilon}{k}.$$ 
Then the sets $X_k=k(\Lambda_k\cap \mathbb{B}(0,1/2))\subset \R^n$ are  uniformly separated
$$|\lambda- \lambda'|\gtrsim \epsilon,\;\;\lambda\neq \lambda'\in X,$$
 and
$X_k$ converges weakly to some uniformly separated set $X\subset \R^n.$ The limit is not empty because
by Proposition \ref{noempty} for any $M>0,$ 
$$\# \left( \Lambda_k \cap \mathbb{B}(0,M/k)\right)\sim M^d.$$
Observe that this last result would be a direct consequence of the necessary density condition for complete interpolating sets if we could
take balls of radius $r/n$ for a fixed $r>0$ in the condition. Finally, we obtain an infinite set $X$ such that for $\lambda\neq \lambda'\in X$
$$J^*_{n/2}(|\lambda- \lambda'|)=0,$$
in contradiction with Theorem \ref{Fuglede-theorem}.
\qed

\begin{remark}
Note that the fact that the interpolating sequence $\{\Lambda_k\}$ is complete was 
used only to guarantee that $\# \left( \Lambda_k \cap \mathbb{B}(0,M/k)\right)\sim M^d$. So, the above result could be extended to 
sequences $\{\Lambda_k\}$ such that $\{\kappa_{k,\lambda}\}_{\lambda\in \Lambda_k}$ is orthonormal (but not necessarily a basis for $\P_k$) if $\Lambda_k \cap \mathbb{B}(0,M/k)$ contains enough points.
\end{remark}

%
%


\end{document}